\def\0{\emptyset}
\def\n{\noindent}
\newtheorem{theorem}{Theorem}
\newtheorem{lemma}[theorem]{Lemma}
\newtheorem{claim}[theorem]{Claim}
\newtheorem{conjecture}[theorem]{Conjecture}
\newtheorem{problem}[theorem]{Problem}
\begin{document}
\title{Vertex degree sums for perfect matchings in 3-uniform hypergraphs }
\thanks{
Yi Zhang and Mei Lu are supported by the National Natural Science Foundation of China (Grants 61373019 and 11771247).
Yi Zhao is partially supported by NSF grants DMS-1400073 and DMS-1700622.}

\author{Yi Zhang}
\address{ Department of Mathematical
Sciences, Tsinghua University, Beijing, 100084}
\email{shouwangmm@sina.com}

\author{Yi Zhao}
\address
{Department of Mathematics and Statistics, Georgia State University, Atlanta, GA 30303}
\email{yzhao6@gsu.edu}

\author{Mei Lu}
\address{Department of Mathematical
Sciences, Tsinghua University, Beijing, 100084}
\email{mlu@math.tsinghua.edu.cn}

\date{\today}

\keywords{Perfect matchings; Hypergraphs; Dirac's theorem; Ore's condition}

\begin{abstract}
We determine the minimum degree sum of two adjacent vertices that ensures a perfect matching in a 3-graph without isolated vertex. More precisely, suppose that $H$ is a 3-uniform hypergraph whose order $n$ is sufficiently large and divisible by $3$. If $H$ contains no isolated vertex and $\deg(u)+\deg(v) > \frac{2}{3}n^2-\frac{8}{3}n+2$ for any two vertices $u$ and $v$ that are contained in some edge of $H$, then $H$ contains a perfect matching. This bound is tight.
\end{abstract}

\maketitle

\section{Introduction}

A $k$-uniform hypergraph (in short, $k$-graph) $H$ is a pair $(V,E)$, where $V := V (H)$ is a finite set
of vertices and $E := E(H)$ is a family of $k$-element subsets of $V$. A matching of size $s$ in $H$ is a family of $s$ pairwise disjoint edges of $H$. If the matching covers all the vertices of $H$, then we call it a \emph{perfect matching}.
Given a set $S \subseteq V$, the \emph{degree} $\deg_{H}(S)$ of $S$ is the number of the edges of $H$ containing $S$. We simply write $\deg(S)$ when $H$ is obvious from the context.

Given integers $\ell<k\le n$ such that $k$ divides $n$, we define the minimum {\em $\ell$-degree threshold} $m_\ell(k,n)$ as the smallest integer $m$ such that every $k$-graph $H$ on $n$ vertices with $\delta_\ell(H) \geq m$ contains a perfect matching. In recent years the problem of determining $m_\ell(k,n)$ has received much attention, see, \emph{e.g.}, \cite{Alon,Han,Han3,Kha1, Kha2,Kuhn1,Kuhn4,Kuhn3,Mar,Pik,Rod2,Rod1,Rod3,Tre,TrZh13,Town2}. In particular, R\"{o}dl,  Ruci\'{n}ski, and Szemer\'{e}di \cite{Rod3} determined $m_{k-1}(k, n)$ for all $k\ge 3$ and sufficiently large $n$.
Treglown and Zhao \cite{Tre, TrZh13} determined $m_\ell(k,n)$ for all $\ell \geq k/2$ and sufficiently large $n$.
For more Dirac-type results on hypergraphs, we refer readers to surveys \cite{RoRu-s, Zhao}.



In this paper we consider vertex degrees in 3-graphs. H\`{a}n, Person and Schacht \cite{Han} showed that
\begin{equation}
\label{eq:Nu4}
m_1(3,n) =  \left(\frac{5}{9}+o(1)\right)\binom{n}{2}.
\end{equation}
K\"{u}hn, Osthus and Treglown \cite{Kuhn2} and independently Khan \cite{Kha1} later proved that $m_1(3,n) = \binom{n-1}{2} - \binom{2n/3}{2}+1$ for sufficiently large $n$.



Motivated by the relation between Dirac's condition and Ore's condition for Hamilton cycles, Tang and Yan \cite{Tang} studied the degree sum of two $(k-1)$-sets that guarantees a tight Hamilton cycle in $k$-graphs. Zhang and Lu \cite{Yi1} studied the degree sum of two $(k-1)$-sets that guarantees a perfect matching in $k$-uniform hypergraphs.

It is more natural to consider the degree sum of two vertices that guarantees a perfect matching in hypergraphs.
For two distinct vertices $u, v$ in a hypergraph, we call $u,v$ \emph{adjacent} if there exists an edge containing both of them. The following are three possible ways of defining the minimum degree sum of 3-graphs.
Let $\sigma_2(H) = \min \{\deg(u)+\deg(v): u, v \in V(H)\}$, $\sigma_2'(H)=\min \{\deg(u)+\deg(v): u \,\, \text{and} \,\, v \,\, \text{are} \,\, \text{adjacent} \}$ and $\sigma_2''(H) = \min \{\deg(u)+\deg(v): u \,\, \text{and} \,\, v \,\, \text{are} \,\, \text{not} \,\,  \text{adjacent} \}$.

The parameter $\sigma_2$ is closely related to the Dirac threshold $m_1(3, n)$. We can prove that when $n$ is divisible by 3 and sufficiently large, every 3-graph $H$ on $n$ vertices with $\sigma_2(H) \geq  2 (\binom{n-1}{2} - \binom{2n/3}{2})+1$ contains a perfect matching. Indeed,  such $H$ contains at most one vertex $u$ with $\deg (u) \leq \binom{n-1}{2} - \binom{2n/3}{2}$. If $\deg (u) \le (5/9 -\varepsilon) \binom{n}2$ for some  $\varepsilon > 0 $, then we choose an edge containing $u$ and find a perfect matching in the remaining 3-graph by \eqref{eq:Nu4} immediately. Otherwise, $\delta_1(H) \ge (5/9 -\varepsilon) \binom{n}2$. We can prove that $H$ contains a perfect matching by following the same process as in \cite{Kuhn2}.\footnote{In fact, due to the absorbing method, we only need to verify the extremal case.}


On the other hand, no condition on $\sigma_2''$ alone guarantees a perfect matching. In fact, let $H$ be the 3-graph whose edge set consists of all triples that contain a fixed vertex. This $H$ contains no two disjoint edges even though it satisfies all conditions on $\sigma_2''$ (because any two vertices of $H$ are adjacent).

Therefore we focus on $\sigma'_2$. More precisely, we determine the largest $\sigma_2'(H)$ among all 3-graphs $H$ of order $n$ without isolated vertex such that $H$ contains no perfect matching. (Trivially $H$ contains no perfect matching if it contains an isolated vertex.)
Let us define a 3-graph $H^*$, whose vertex set is partitioned into two vertex classes $S$ and $T$ of size $n/3+1$ and $2n/3-1$, respectively, and whose edge set consists of all the triples containing at least two vertices of $T$. For any two vertices $u \in T$ and $v \in S$,
\[
\deg(u) = \binom{2n/3-2}{2}+\left(\frac{n}{3}+1\right)\left(\frac{2n}{3}-2\right)> \binom{2n/3-1}{2} = \deg(v).
 \]
Hence $\sigma_2'(H^*) = \binom{2n/3-2}{2}+(n/3+1)(2n/3-2)+\binom{2n/3-1}{2}=2n^2/3 -{8}n/{3} +2$. Obviously, $H^*$ contains no perfect matching. The following is our main result.

\begin{theorem}\label{the1}
There exists $n_0 \in \mathbb{N}$ such that the following holds for all integers $n\ge n_0$ that are divisible by $3$. Let $H$ be a $3$-graph of order $n \geq n_0$ without isolated vertex. If $\sigma_2'(H) > \sigma'_2(H^*)= \frac{2}{3}n^2-\frac{8}{3}n+2$, then $H$ contains a perfect matching.
\end{theorem}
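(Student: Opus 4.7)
The plan is to combine the absorbing method with an extremal-stability dichotomy based on proximity to $H^*$, in the style of H\`an--Person--Schacht \cite{Han} and K\"uhn--Osthus--Treglown \cite{Kuhn2}. We build a small absorbing matching $M_0$ of size $O(n)$ with the property that any $W\subseteq V(H)\setminus V(M_0)$ of size $o(n)$ with $|W|$ divisible by $3$ can be covered by a perfect matching of $H[V(M_0)\cup W]$. A perfect matching of $H$ then follows from any near-perfect matching of $H-V(M_0)$, the leftover being absorbed by $M_0$.

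Fix small $\eta>0$ and call a vertex $v$ \emph{small} if $\deg(v)<(5/9-\eta)\binom{n}{2}$. The hypothesis $\sigma_2'(H)>\frac{2}{3}n^2-\frac{8}{3}n+2$ immediately rules out two small vertices lying in a common edge: otherwise their degree sum is less than $2(5/9-\eta)\binom{n}{2}$, which falls below $\frac{2}{3}n^2-\frac{8}{3}n+2$ for small $\eta$ and large $n$. Thus every edge is of type large--large--large or small--large--large, and any perfect matching must use one edge to cover each small vertex. In the \emph{non-extremal} case, when the set $S$ of small vertices has size $|S|\le(1/3-\alpha)n$ for a suitable $\alpha>0$, we first cover $S$ by an auxiliary matching $M_1$ (constructed by a Hall-type argument on the links of small vertices, exploiting that their neighbors have degree $\ge(4/9+\eta)\binom{n}{2}$), and then apply \eqref{eq:Nu4} to $H-V(M_0)-V(M_1)$, whose minimum degree is still $\ge(5/9-\eta)\binom{n}{2}-O(n)$, to obtain a near-perfect matching; the leftover is absorbed by $M_0$.

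The heart of the argument is the \emph{extremal} case $|S|>(1/3-\alpha)n$, in which $H$ closely resembles $H^*$: $|S|\approx n/3$, $|T|:=|V(H)\setminus S|\approx 2n/3$, and almost every edge has at least two vertices in $T$. A perfect matching must then consist of one crossing edge (one $S$-vertex and two $T$-vertices) per $S$-vertex, together with triples inside $T$ covering the remaining $T$-vertices. Since the only obstruction in $H^*$ is the single surplus $|S|=n/3+1$, the strict inequality $\sigma_2'(H)>\sigma_2'(H^*)$ must be used either to force $|S|\le n/3$ or to produce extra matching structure (an edge inside $S$, or surplus crossing edges) that absorbs the extra $S$-vertex. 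This case is the main obstacle: unlike a threshold result with slack, here the inequality beats $\sigma_2'(H^*)$ by essentially one unit and must be leveraged carefully. The delicate points are rigorously identifying the partition $S\cup T$ when degrees do not cleanly separate, handling the boundary $|S|\in\{n/3,n/3+1\}$ where $H$ is closest to $H^*$, and verifying Hall-type matching conditions in the bipartite-like link structure after local adjustments to accommodate atypical vertices.
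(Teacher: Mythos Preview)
Your outline identifies the right dichotomy but leaves the decisive step unproved and misses the structural observation that dissolves it. You note correctly that no two small vertices are adjacent, so every edge meets $S$ in at most one vertex. The immediate consequence is that if $|S|\ge n/3+1$, then choosing any $n/3+1$ vertices of $S$ as the ``$S$-part'' exhibits $H$ as a subgraph of $H^*$, whence $\sigma_2'(H)\le\sigma_2'(H^*)$, contradicting the hypothesis. Thus $|S|\le n/3$ is forced outright; the boundary value $|S|=n/3+1$ you flag as delicate simply does not occur, and no ``extra matching structure to absorb the extra $S$-vertex'' is ever needed. This is exactly how the paper proceeds: it proves a stability statement (Theorem~\ref{theorem6}) asserting that under the weaker hypothesis $\sigma_2'(H)>2n^2/3-\varepsilon n^2$ one has either $H\subseteq H^*$ or a perfect matching, and Theorem~\ref{the1} follows in one line since the strict inequality $\sigma_2'(H)>\sigma_2'(H^*)$ excludes the first alternative.

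The methods also diverge. The paper does not use the absorbing technique at all. Once $|W|\le n/3$ is known (with $W$ the low-degree set), it first finds a matching covering $W$ (Lemma~\ref{lemma7}) by a maximality-plus-counting argument with several cases, not a Hall condition; your one-line ``Hall-type argument on the links'' does not substitute for this, as the links of distinct small vertices must yield pairwise disjoint edges and the obstruction is a rainbow-matching condition, not a bipartite one. The paper then takes a largest matching $M$ covering $W$ and, assuming $|M|<n/3$, derives a contradiction by bounding $\sum_{i=1}^3\deg(u_i)$ for three uncovered vertices using Lemmas~\ref{lemma1}--\ref{lemma3}; the result \eqref{eq:Nu4} is invoked only on a subhypergraph in one sub-case. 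By contrast, your absorbing step is not routine here: with up to $n/3$ vertices of potentially tiny degree, the standard absorbing lemma (which requires every vertex to lie in many absorbers) does not apply as stated, and you have not explained how $M_0$ is built so that the leftover --- which a priori could contain small vertices --- is absorbable.
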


Theorem \ref{the1} actually follows from the following stability result.

\begin{theorem} \label{theorem6}
There exist $\varepsilon>0$ and $n_0 \in \mathbb{N}$ such that the following holds for all integers $n\ge n_0$ that are divisible by $3$.  Suppose that $H$ is a $3$-graph of order $n \geq n_0$ without isolated vertex and $\sigma_2'(H) > 2n^2/3-\varepsilon n^2$, then $H \subseteq H^*$ or $H$ contains a perfect matching.
\end{theorem}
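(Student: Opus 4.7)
The strategy is the absorbing method, adapted to accommodate both the degree-sum condition $\sigma_2'$ and the possible presence of low-degree vertices, whose existence is forced by the extremal structure $H^*$ (where vertices in $S$ have degree only $\binom{2n/3-1}{2}\approx 2n^2/9$, much smaller than the average).

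\emph{Step 1 (Low-degree vertices form an independent set).} Set $\eta=\varepsilon/3$ and let $L=\{v\in V(H):\deg(v)\le n^2/3-\eta n^2\}$. If two vertices $u,v\in L$ were adjacent, then $\deg(u)+\deg(v)\le 2n^2/3-2\eta n^2$, violating $\sigma_2'(H)>2n^2/3-\varepsilon n^2$. Hence no edge of $H$ contains two vertices of $L$; equivalently, $L$ is an independent set in the shadow graph of $H$.

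\emph{Step 2 (Dichotomy on $|L|$).} If $|L|\ge n/3+1$, pick any $S\subseteq L$ of size $n/3+1$ and let $T=V(H)\setminus S$. Every edge meets $S$ in at most one vertex and hence $T$ in at least two, so $H\subseteq H^*$ with the partition $(S,T)$, and we are done. Otherwise $|L|\le n/3$ and we build a perfect matching in two further steps.

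\emph{Step 3 (Cover $L$).} Each $v\in L$ lies in at least one edge (no isolated vertex), and every such edge has its other two endpoints in $V\setminus L$. Because $|L|\le n/3\le |V\setminus L|/2$, a Hall-type (greedy or random) argument yields a matching $M_L$ of size $|L|$ covering $L$, with each edge of $M_L$ containing exactly one vertex of $L$. If $|L|=n/3$ then $M_L$ is already a perfect matching of $H$ and we are done.

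\emph{Step 4 (Perfect matching of the remainder).} Set $H':=H-V(M_L)$. Every vertex of $H'$ lies in $V\setminus L$, hence has degree at least $n^2/3-\eta n^2=(2/3-o(1))\binom{n}{2}$, comfortably above the Han--Person--Schacht vertex-degree threshold from~\eqref{eq:Nu4}. After accounting for the $O(|L|n)$ edges lost to $V(M_L)$, one runs the standard three-part absorbing argument on $H'$: build a small absorbing matching $M_0$, apply a near-perfect matching lemma to get a matching $M'$ of $H'-V(M_0)$ leaving only a tiny set $U$ uncovered, and use $M_0$ to absorb $U$ into a perfect matching of $H'$. Concatenating with $M_L$ yields a perfect matching of $H$.

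\emph{Main obstacle.} Step~3 is the most delicate, since vertices in $L$ may have degree as small as $1$, so the edges through them need not be spread out enough for a naive Hall-type argument. The saving grace is that $\sigma_2'(H)>2n^2/3-\varepsilon n^2$ forces every neighbor (in the shadow graph) of a low-degree vertex to have near-maximum degree, which in turn implies that the link-pairs of $L$-vertices form a dense auxiliary bipartite structure on $V\setminus L$; this density should suffice to realize the Hall condition, possibly after reserving a random set of edges for the covering. A secondary challenge arises in Step~4 when $|L|$ is close to $n/3$, since $|V(H')|$ is then small relative to $n$ and one must carefully track degrees within $H'$ itself rather than inheriting them from $H$.
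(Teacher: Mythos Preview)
Your outline matches the paper's architecture (isolate the low-degree set, show $|L|\ge n/3+1$ forces $H\subseteq H^*$, cover $L$, finish), but Steps~3 and~4 as written are genuine gaps, and you have correctly flagged both as obstacles without actually overcoming them.

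For Step~3, a Hall-type or random argument does not go through: a vertex $v\in L$ can have $\deg(v)=1$, so its link is a single pair, and your claim that ``link-pairs of $L$-vertices form a dense auxiliary bipartite structure'' is false in general. The paper's Lemma~\ref{lemma7} instead takes a \emph{largest} matching $M$ of type $UUW$ and, assuming some $v_0\in W$ is uncovered, runs a case analysis on $|W|$. When $|W|\le(\tfrac13-\gamma)n$ it picks an adjacent pair $v_0,u_0$ and bounds $\deg(v_0)+\deg(u_0)$ via an augmenting-path argument (the set $M'$ of matching edges whose $U$-vertices see many $U_2$-neighbours of $v_0$); when $|W|>(\tfrac13-\gamma)n$ it first shows $|M|\ge n/3-\gamma'n$ and then counts edges among triples of matching edges versus three leftover vertices. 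None of this is captured by ``greedy or random''.

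For Step~4, the $O(|L|n)$ loss is $\Theta(n^2)$ when $|L|$ is close to $n/3$, which wipes out the $n^2/3-\eta n^2$ lower bound, so vertices of $H'$ need \emph{not} satisfy the H\`an--Person--Schacht threshold on $|V(H')|$ vertices. The paper does not delete a cover of $W$ and work in the remainder; it takes a largest matching $M\supseteq M_L$ covering $W$, picks three uncovered vertices $u_1,u_2,u_3$, and bounds $\sum_i\deg(u_i)$ from above using three extremal lemmas (a rainbow $3$-partite matching bound, and two link-graph lemmas, Lemmas~\ref{lemma1}--\ref{lemma3}). The H\`an--Person--Schacht result is invoked only in the sub-case $|M|>n/3-\eta n$ and the set $W'=\{v:\deg(v)\le(\tfrac{5}{18}+\tau)n^2\}$ has $|W'|\le\rho n$; then one deletes only the $|W'|$ matching edges covering $W'$, losing at most $3\rho n^2$ edges per vertex, which is where your Step~4 idea actually works.
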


Indeed, if $\sigma_2'(H) > 2n^2/3 -{8}n/{3} +2$, then $H \nsubseteq H^*$ and by Theorem \ref{theorem6}, $H$ contains a perfect matching. Furthermore, Theorem \ref{theorem6} implies that $H^*$ is the unique extremal 3-graph for Theorem \ref{the1} because all proper subgraphs $H$ of $H^*$  satisfy $\sigma'_2(H) < \sigma'_2(H^*)$.



This paper is organized as follows. In Section 2, we provide preliminary results and an outline of our proof. We prove an important lemma in Section 3 and we complete the proof of Theorem \ref{theorem6} in Section 4. Section  5 contains concluding remarks and open problems.

\vskip.2cm

\n{\bf Notation:}
Given vertices $v_1, \dots, v_t$, we often write $v_1 \cdots v_t$ for $\{v_1, \dots, v_t\}$.
The neighborhood $ N(u,v)$ is the set of the vertices $w$ such that $uvw \in E(H)$. Let $V_1,V_2,V_3$ be three vertex subsets of $V(H)$, we say that an edge $e\in E(H)$ is of type $V_1V_2V_3$ if $e=\{v_1,v_2,v_3\}$ such that $v_1 \in V_1$, $v_2 \in V_2$ and $v_3\in V_3$.

Given a vertex $v\in V(H)$ and a set $A \subseteq V(H)$, we define the \emph{link} $L_{v}(A)$ to be the set of all pairs $uw$ such that $u, w\in A$ and $uvw\in E(H)$. When $A$ and $B$ are two disjoint sets of $V(H)$, we define $L_{v}(A,B)$ as the set of all pairs $uw$ such that $u\in A$, $w\in B$ and $uvw\in E(H)$.

We write $0 < a_1 \ll a_2 \ll a_3 $ if we can choose the constants $a_1, a_2, a_3$ from right to left. More precisely there are increasing functions $f$ and $g$ such that given $a_3$, whenever we choose some $a_2\leq f(a_3)$ and $a_1 \leq g(a_2)$, all calculations needed in our proof are valid.

\vskip.2cm

\section{Preliminaries and proof outline }

\vskip.2cm

We will need small constants
\begin{align*}
0 <  \varepsilon  \ll \eta \ll \gamma \ll \gamma' \ll \rho \ll \tau \ll 1.
\end{align*}
Suppose $H$ is a 3-graph such that $\sigma_2'(H) > 2n^2/3-\varepsilon n^2$. Let $W = \{v\in V(H): \deg(v) \leq n^2/3-\varepsilon n^2/2\}$, $U = V\setminus W$. If $W= \emptyset$, then \eqref{eq:Nu4} implies that $H$ contains a perfect matching. We thus assume that $|W|\ge 1$.
Any two vertices of $W$ are not adjacent -- otherwise $\sigma'_2(H) \leq 2n^2/3-\varepsilon n^2$, a contradiction.  If $|W| \geq n/3+1$, then $H \subseteq H^*$ and we are done. We thus assume $|W| \leq n/3$ for the rest of the proof.

Our proof will use the following claim.

\begin{claim}\label{claim2}
If $|W| \geq n/4$, then every vertex of $U$ is adjacent to some vertex of $W$.
\end{claim}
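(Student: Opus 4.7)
The plan is to argue by contradiction via a direct degree bound. Suppose there exists $u\in U$ that is not adjacent to any vertex of $W$. Then every edge of $H$ containing $u$ must have its other two vertices in $U\setminus\{u\}$, so $\deg(u)\leq \binom{|U|-1}{2}$. Since $|W|\geq n/4$, we have $|U|=n-|W|\leq 3n/4$, hence
\[
\deg(u)\leq \binom{3n/4-1}{2}<\frac{9n^2}{32}.
\]

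The goal is then to compare this upper bound with the lower bound guaranteed by membership in $U$. By definition of $U$, we have $\deg(u)>n^2/3-\varepsilon n^2/2$. Since $9/32<1/3$, the gap $\tfrac{1}{3}-\tfrac{9}{32}=\tfrac{5}{96}$ is a positive constant, and for $\varepsilon$ sufficiently small (which holds by the hierarchy $\varepsilon\ll\eta\ll\cdots\ll 1$) and $n$ sufficiently large, we get $9n^2/32<n^2/3-\varepsilon n^2/2$, a contradiction. So every vertex of $U$ must be adjacent to some vertex of $W$.

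There is essentially no obstacle here: the argument reduces to a one-line counting estimate against the definition of $U$, and the hypothesis $|W|\geq n/4$ is used only to force $|U|\leq 3n/4$, which in turn forces the $\binom{|U|-1}{2}$ bound to fall below $n^2/3$. The $\sigma'_2$ condition and the non-adjacency of pairs in $W$ (established in the preceding paragraph of the paper) are not needed for this particular claim — only the definition of $U$ and $W$ in terms of individual degrees is used.
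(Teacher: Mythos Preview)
Your proof is correct and essentially identical to the paper's own argument: both assume a vertex $u\in U$ with no neighbor in $W$, bound $\deg(u)\le\binom{|U|-1}{2}\le\binom{3n/4-1}{2}<\tfrac{9}{32}n^2$, and contradict the defining inequality $\deg(u)>n^2/3-\varepsilon n^2/2$. Your closing remark that only the definitions of $U$ and $W$ (and not $\sigma_2'$) are needed is also accurate.
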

\begin{proof} To the contrary, assume that some vertex $u_0\in U$ is not adjacent to any vertex in $W$. Then we have $\deg(u_0) \leq \binom{|U|-1}{2}=\binom{n-|W|-1}{2}$. Since $|W| \geq n/4$ and $n$ is sufficiently large,

\begin{align*}
\deg(u_0) \leq \binom{n-n/4-1}{2} = \frac{9}{32}n^2-\frac{9}{8}n+1 < \frac{n^2}{3} -\frac{\varepsilon}2 n^2,
\end{align*}
which contradicts the definition of $U$.
\end{proof}

By Claim \ref{claim2}, when $|W| \geq \frac{n}{4}$, we have $\deg(u) \geq (2n^2/3-\varepsilon n^2)-\binom{n-|W|}{2}$
for every $u\in U$. This is stronger than the bound given by the definition of $U$ because
\[
\left(\frac{2}{3}n^2 - \varepsilon n^2\right)-\binom{n-|W|}{2} \ge \left(\frac{2}{3}n^2 - \varepsilon n^2\right)-\binom{n- n/4}{2} > \frac{n^2}{3}-\frac{\varepsilon}{2} n^2.
\]

\medskip
Our proof consists of two steps.

\noindent{\bf Step 1.}  We prove that $H$ contains a matching that covers all the vertices of $W$.

\begin{lemma} \label{lemma7}
There exist $\varepsilon >0$ and $n_0 \in \mathbb{N}$ such that the following holds. Suppose that $H$ is a $3$-graph of order $n \geq n_0$ without isolated vertex and $\sigma_2'(H) > 2n^2/3-\varepsilon n^2$.  Let $W =\{v \in V(H): \deg(v) \le n^2/3-\varepsilon n^2/2\}$. If $|W| \leq n/3$, then $H$ contains a matching that covers every vertex of $W$.
\end{lemma}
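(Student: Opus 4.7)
The plan is to argue by contradiction via an augmentation argument, with the engine being that every $w\in W$ has fairly large degree. Since $H$ has no isolated vertex, each $w\in W$ has some neighbor $v$, and $\deg(v)\le\binom{n-1}{2}<n^2/2$; the hypothesis $\sigma_2'(H)>2n^2/3-\varepsilon n^2$ then forces
\[
\deg(w)>\frac{2}{3}n^2-\varepsilon n^2-\binom{n-1}{2}>\frac{n^2}{6}-\varepsilon n^2.
\]
Since $W$ is independent (any two vertices of $W$ would otherwise violate $\sigma_2'(H)>2n^2/3-\varepsilon n^2$), every edge of $H$ meeting $W$ has its two non-$W$ vertices in $U$, so the link $L_w$ is a graph on $U$ with at least $n^2/6-\varepsilon n^2$ edges for every $w\in W$.

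Next, I would let $M=\{e_i=\{w_i,u_i,v_i\}:i\le t\}$ be a matching in $H$ with $w_i\in W$ and $u_i,v_i\in U$, chosen to cover the maximum number of vertices of $W$ (we may assume every edge of $M$ contains exactly one $W$-vertex, since edges wholly inside $U$ do not help cover $W$). Write $W_M=\{w_i\}_{i\le t}$, $U_M=\bigcup_i\{u_i,v_i\}$, $W_0=W\setminus W_M$, $U_0=U\setminus U_M$, and assume for contradiction that some $w^*\in W_0$ exists. By maximality no edge of $L_{w^*}$ lies entirely in $U_0$. If $|W|\le n/4$, then $|U_M|\le 2|W|-2<n/2$, so the number of pairs inside $U_M$ is at most $\binom{|U_M|}{2}<n^2/8<\deg(w^*)$; hence some ``mix'' edge $\{w^*,u_i,x\}$ with $x\in U_0$ lies in $L_{w^*}$. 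To augment, I would seek a replacement for $e_i$ of the form $\{w_i,v_i,y\}$ or $\{w_i,y,z\}$ with $y,z\in U_0\setminus\{x\}$; combined with $\{w^*,u_i,x\}$ this enlarges $M$ by one edge covering $w^*$, contradicting maximality. Failure of the replacement for every mix edge at $e_i$ would force $L_{w_i}$ to have no edge in $U_0$, hence $|L_{w_i}|\le|U_M|\cdot|U|$, which is too small given $|U_M|<n/2$ and $\deg(w_i)\ge n^2/6-\varepsilon n^2$.

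The case $n/4<|W|\le n/3$ is harder: $L_{w^*}$ may lie entirely inside $U_M$. Here I would invoke Claim~\ref{claim2} to obtain the stronger bound $\deg(u)\ge (2/3-\varepsilon)n^2-\binom{n-|W|}{2}$ for every $u\in U$, making $H[U]$ very dense, and then use ``double'' augmenting swaps: pick $\{w^*,u_i,u_j\}\in E(H)$ with $i\ne j$ and replace both $e_i,e_j$ by edges $\{w_i,v_i,y\}$ and $\{w_j,v_j,z\}$ for distinct $y,z\in U_0$. The principal obstacle is that $|U_0|$ may be small and the two replacement edges must not conflict; I expect to handle this by a double-counting argument summed over all $w^*\in W_0$ and all ordered pairs $(e_i,e_j)\in M^2$, where the failure of every candidate double swap forces a structural bound that is incompatible with the link-size lower bound $\deg(w_i)\ge n^2/6-\varepsilon n^2$ and the high minimum degree on $U$ supplied by Claim~\ref{claim2}.
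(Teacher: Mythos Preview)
Your augmentation scheme has a real gap in the case $|W|\le n/4$. You correctly observe that some mix edge $\{w^*,u_i,x\}$ exists, and that failure of the replacement forces every pair in $L_{w_i}$ to meet $(U_M\setminus\{v_i\})\cup\{x\}$, hence $|L_{w_i}|\le |U_M|\cdot|U|$. But this bound is \emph{not} small enough: with $|U_M|$ close to $n/2$ and $|U|\ge 3n/4$ it can be of order $n^2/2$, which comfortably exceeds your lower bound $\deg(w_i)>n^2/6-\varepsilon n^2$. No contradiction follows. The underlying problem is that you have traded the hypothesis $\sigma_2'(H)>2n^2/3-\varepsilon n^2$ for the much weaker consequence $\deg(w)>n^2/6-\varepsilon n^2$; that single-vertex bound is simply too slack to drive an augmentation argument of this type.

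The paper's proof keeps the full $\sigma_2'$ hypothesis in play throughout. When $|W|\le(\tfrac13-\gamma)n$ it fixes $v_0\in W_2$ and an adjacent $u_0\in U_2$ (or $u_1\in U_1$ if no such $u_0$ exists), introduces the auxiliary set $M'=\{e\in M:\exists\,u'\in e,\ |N(v_0,u')\cap U_2|\ge3\}$, and bounds $\deg(v_0)+\deg(u_0)$ explicitly in terms of $|M|,|M'|,|W|,|U_2|$; after optimising over $|M'|$ and $|M|$ the sum falls below $2n^2/3-\varepsilon n^2$, contradicting $\sigma_2'(H)$. When $|W|>(\tfrac13-\gamma)n$ the paper first proves a separate claim that $|M|\ge n/3-\gamma'n$ (again by bounding a suitable adjacent pair), and then sums degrees of a \emph{triple} $u_1,u_2\in U_2$, $v_0\in W_2$: for any two edges $e_1,e_2\in M$ some $UUW$-type triple across $e_1\cup e_2\cup\{u_1,u_2,v_0\}$ must be absent, yielding an upper bound on $\sum\deg$, while Claim~\ref{claim2} gives the matching lower bound. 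Your double-swap idea does not address the regime where $|U_0|$ is $O(1)$ (e.g.\ $|W|=n/3$, $t=|W|-1$ gives $|U_0|=2$), which is precisely where the triple-degree-sum argument is needed.
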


Our approach towards Lemma \ref{lemma7} begins by considering a largest matching $M$ such that every edge of $M$ contains one vertex from $W$ and suppose $|M| < |W|$. If $|W| \leq (1/3-\gamma)n$, then we choose two adjacent vertices, one from $W$ and the other from $V\setminus W$ to derive a contradiction with $\sigma'_2(H)$. If $n/3 \geq |W| > (1/3-\gamma)n  $, we use three unmatched vertices, one from $W$ and two from $V\setminus W$ to derive a contradiction.

\noindent{\bf Step 2.} We show that $H$ contains a perfect matching. 


Because of Lemma \ref{lemma7}, we begin by considering a largest matching $M$ such that $M$ covers every vertex of $W$ and suppose that $|M| < n/3$. After choosing three vertices from $V\setminus V(M)$, we distinguish the cases when $|M| \leq n/3-\eta n$ and when $|M| > n/3-\eta n$ and derive a contradiction by comparing upper and lower bounds for the degree sum of these three vertices. When $|M| > n/3-\eta n$, we need to apply \eqref{eq:Nu4}.

\medskip
In Step 2 we need three simple extremal results. The first lemma is Observation 1.8 of Aharoni and Howard \cite{Aha}.
A $k$-graph $H$ is called \emph{$k$-partite} if $V(H)$ can be partitioned into $V_1,\cdots,V_k$, such that each edge of $H$ meets every $V_i$ in precisely one vertex. If all parts are of the same size  $n$, we call $H$ \emph{$n$-balanced}.
\begin{lemma}\cite{Aha}
\label{lemma1}
Let $F$ be the edge set of an $n$-balanced $k$-partite $k$-graph. If $F$ does not contain $s$ disjoint edges, then $|F| \leq (s-1)n^{k-1}$.
\end{lemma}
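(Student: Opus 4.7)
I would prove this by induction on $s$. The base case $s=1$ is vacuous: having no single edge forces $F = \emptyset$ and $|F| = 0 = (s-1)n^{k-1}$.

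For the inductive step, assume the claim for $s-1$ and let $F$ be an $n$-balanced $k$-partite $k$-graph with no $s$ disjoint edges. If the maximum matching in $F$ has size at most $s-2$, then the inductive hypothesis already yields $|F| \leq (s-2)n^{k-1} \leq (s-1)n^{k-1}$. So we may assume $F$ contains a maximum matching $M = \{e_1, \ldots, e_{s-1}\}$ of size exactly $s-1$; by maximality, every edge of $F$ meets $V(M)$.

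From here, my plan is a vertex-cover-style count: assign each edge of $F$ to a vertex of $V(M)$ it contains via a canonical rule, bounding each fiber by $n^{k-1}$. The crude implementation gives only $|F| \leq k(s-1)n^{k-1}$, off by a factor of $k$. To remove this factor, I would choose $M$ extremally---for example, maximizing the sum of $V_1$-degrees of $V(M) \cap V_1$---and argue that any edge $e \in F$ avoiding all $V_1$-vertices of $M$ must intersect $V(M)$ in some $V_j$ with $j \geq 2$, yielding a swap that strictly improves the extremal invariant, a contradiction. A complementary approach is an auxiliary induction on the link $L_v$ of a maximum-degree vertex $v \in V_1$, bounding the edges through $v$ (at most $n^{k-1}$) together with the edges avoiding $v$ (which, under appropriate matching constraints, should contain $s-1$ disjoint edges extendable back through $L_v$).

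The main obstacle is that the extremal configurations are not always unions of stars, so a pure vertex-cover argument cannot suffice. When $k = 3$, $n = 2$, $s = 2$, the 3-partite hypergraph $\{a_1b_1c_1,\,a_1b_2c_2,\,a_2b_1c_2,\,a_2b_2c_1\}$ has four edges with no two disjoint and attains $|F| = (s-1)n^{k-1} = 4$, yet its minimum vertex cover has size $2$ and no single vertex covers it. Consequently, the swap procedure may merely rearrange uncovered edges rather than eliminate them, and any link-based induction must handle the combinatorial difficulty of assembling $s$ disjoint edges from partial rainbow matchings across the links---a theme connected to the Aharoni--Berger rainbow matching conjectures.
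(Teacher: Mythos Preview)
The paper does not prove this lemma at all; it is quoted verbatim as Observation~1.8 of Aharoni and Howard~\cite{Aha} and used as a black box. So there is no ``paper's own proof'' to compare against beyond the cited source.

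That said, your proposal is not a proof but an outline that you yourself correctly diagnose as incomplete. The vertex-cover count gives only $k(s-1)n^{k-1}$, and your proposed fixes---an extremal choice of $M$ with swaps, or a link-based induction---are not carried out; you even exhibit the tight example with $k=3$, $n=2$, $s=2$ showing that no single vertex covers the hypergraph, which blocks the star-cover idea outright. So as written there is a genuine gap: none of the sketched strategies is brought to a conclusion, and the obstacles you name are real.

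The standard argument (and almost certainly the one in~\cite{Aha}) bypasses all of this with a single observation. Identify each part $V_j$ with $\mathbb{Z}_n$. For each $(a_2,\dots,a_k)\in\mathbb{Z}_n^{k-1}$, the set
\[
P_{(a_2,\dots,a_k)}=\bigl\{(i,\,i+a_2,\,\dots,\,i+a_k):i\in\mathbb{Z}_n\bigr\}
\]
is a perfect matching of the complete $n$-balanced $k$-partite $k$-graph, and these $n^{k-1}$ perfect matchings partition its edge set. Since $F$ has no $s$ pairwise disjoint edges, $|F\cap P_{(a_2,\dots,a_k)}|\le s-1$ for every choice of $(a_2,\dots,a_k)$, and summing gives $|F|\le (s-1)n^{k-1}$. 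This one-line decomposition is exactly what your inductive/swap approach is missing: it replaces the delicate extremal analysis by a global $1$-factorization that makes the bound immediate.
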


The bound in the following lemma is tight because we may let $G_1$ be the empty graph and $G_2=G_3= K_n$.

\begin{lemma}\label{lemma2}
Given two sets $A\subset V$ such that $|A|=3$ and $|V|=n\ge 4$, let $G_1, G_2, G_3$ be three graphs on $V$ such that
no edge of $G_1$ is disjoint from an edge from $G_2$ or $G_3$.
Then $\sum_{i=1}^3\sum_{v\in A} \deg_{G_i}(v) \leq 6(n-1)$.
\end{lemma}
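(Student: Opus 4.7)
The plan is to proceed by case analysis on the matching number $\nu := \nu(G_1)$, using the key structural observation that every edge of $G_2 \cup G_3$ is a $2$-element set meeting every edge of $G_1$ and so must be a vertex cover of $G_1$. Consequently, if $\nu \geq 3$, no $2$-set can simultaneously hit three pairwise disjoint $G_1$-edges, which forces $G_2 = G_3 = \emptyset$; the sum then reduces to $\sum_{v \in A}\deg_{G_1}(v) \leq 3(n-1) < 6(n-1)$. The trivial case $G_1 = \emptyset$ follows immediately from $\deg_{G_j}(v) \leq n-1$ for $j \in \{2,3\}$.

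For $\nu(G_1) = 2$, I would fix two disjoint $G_1$-edges $ab$ and $cd$; every edge of $G_2$ or $G_3$ is then bipartite between $\{a,b\}$ and $\{c,d\}$, giving $|E(G_j)| \leq 4$ and $\sum_{v \in A} \deg_{G_j}(v) \leq 8$ for $j \in \{2,3\}$. Combined with $\sum_{v \in A} \deg_{G_1}(v) \leq 3(n-1)$, the total is at most $3n+13$, which is $\leq 6(n-1)$ for $n \geq 7$. The small cases $n \in \{4,5,6\}$ are handled by noting that additional $G_1$-edges further reduce the set of $2$-covers of $G_1$ and hence shrink $|E(G_j)|$ (at the extreme, $G_1 = K_4$ already forces $G_2 = G_3 = \emptyset$, so the sum collapses to $\sum \deg_{G_1}(v)$).

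The main case is $\nu(G_1) = 1$, where $G_1$ is either contained in a star or equals a triangle. If $G_1$ is a triangle on $\{a,b,c\}$, one checks that every edge of $G_2$ or $G_3$ must lie inside $\{a,b,c\}$ (else some triangle edge is missed), so $\sum_{v \in A} \deg_{G_j}(v) \leq 6$, yielding total $\leq 18 \leq 6(n-1)$. If $G_1$ is contained in a star, pick any edge $xy \in G_1$; then $G_2, G_3 \subseteq (\text{star at }x)\cup(\text{star at }y)$, and I would split by $|\{x,y\}\cap A|\in\{0,1,2\}$. The main obstacle is the sub-case $\{x,y\}\subseteq A$: here the crude bounds $\sum_{v \in A} \deg_{G_j}(v) \leq 2n$ and $\sum_{v \in A} \deg_{G_1}(v) \leq 3(n-1)$ together exceed $6(n-1)$, so one must sub-case further on whether $G_1$ has additional edges and where they lie (inside $A$, or crossing from $A$ to $V\setminus A$), exploiting the trade-off that each extra $G_1$-edge further restricts $G_2, G_3$ by shrinking the set of valid $2$-covers. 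The bound is tight when $n = 4$ and $G_1 = G_2 = G_3$ is the triangle on $A$ (and in a few related configurations), confirming that the estimate cannot be improved.
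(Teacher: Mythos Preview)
Your approach---casing on $\nu(G_1)$ and using the observation that every edge of $G_2\cup G_3$ is a $2$-element vertex cover of $G_1$---is viable and genuinely different from the paper's. The paper never mentions the matching number; instead it lets $\ell_i$ be the number of vertices of $A$ with $\deg_{G_i}\ge 3$ and cases first on $\ell_1$, then on $\ell_2,\ell_3$. The mechanism is that if $\deg_{G_1}(u)\ge 3$ for some $u\in A$, then every edge of $G_2$ or $G_3$ must pass through $u$ (else it would miss one of the $\ge 3$ edges of $G_1$ at $u$), so $G_2,G_3$ are stars at $u$; two such vertices $u,u'$ force $E(G_j)\subseteq\{uu'\}$. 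This ties the structure of $G_2,G_3$ directly to degrees at $A$, so each sub-case closes in a line with no separate small-$n$ treatment.

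Your plan is sound in outline but creates avoidable difficulties, and the parts you defer are not yet proofs. In the star sub-case of $\nu(G_1)=1$ you invoke the crude bound $\sum_{v\in A}\deg_{G_1}(v)\le 3(n-1)$, which manufactures the ``main obstacle'' you describe; since $G_1$ is contained in a star with centre $c$, one actually has $\sum_{v\in A}\deg_{G_1}(v)\le (n-1)+1+1=n+1$, and once the star has at least three leaves each $G_j$ is itself a star at $c$, giving total $\le 3(n+1)\le 6(n-1)$ immediately for $n\ge 3$. The remaining one- and two-leaf stars are then a short check. Similarly, in the $\nu(G_1)=2$ case the bound $\sum_{v\in A}\deg_{G_j}(v)\le 8$ is slack: since $|A\cap\{a,b,c,d\}|\le 3$, at most two of the four bipartite pairs lie inside $A$, so in fact $\sum_{v\in A}\deg_{G_j}(v)\le 6$; combined with a structural bound on $G_1$ (rather than the trivial $3(n-1)$) this handles $n\in\{4,5,6\}$ without the hand-wave about ``additional $G_1$-edges shrinking the $2$-covers''. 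In short, the route works, but as written it is a sketch: tightening the $G_1$-bounds using the structure you already have removes most of the residual casework you anticipate.
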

\begin{proof}
Assume $A=\{u_1,u_2,u_3\}$ and let $b= n-3\ge 1$. We need to show that $\sum_{i=1}^3\sum_{j=1}^3 \deg_{G_i}(u_j) \le 6b+12$.

Let $\ell_i$ denote the number of the vertices in $A$ of degree at least 3 in $G_i$. We distinguish the following two cases:

\noindent{\bf Case 1: } $\ell_1 \geq 1$.

If $\ell_1 \geq 2$, say, $\deg_{G_1}(u_j)\geq 3$ for $j=1,2$, then $E(G_i) \subseteq \{ u_1u_2 \}$ for $i=2,3$ -- otherwise we can find two disjoint edges, one from $G_1$ and the other from $G_2$ or $G_3$. Therefore, $\sum_{j=1}^3 \deg_{G_i}(u_j) \leq 2$ for $i=2,3$. Moreover, $\sum_{j=1}^3 \deg_{G_1}(u_j) \leq 3b+6$. We have $\sum_{i=1}^3\sum_{j=1}^3 \deg_{G_i}(u_j) \leq 3b+10 < 6b+12$.

If $\ell_1 = 1$, say, $\deg_{G_1}(u_1)\geq 3$, then $G_i$ is a star centered at $u_1$ for $i=2,3$ -- otherwise one edge of $G_1$ must be disjoint from one edge of $G_2$ or $G_3$. In this case we have $\sum_{j=1}^3 \deg_{G_1}(u_j) \leq b+2+4$ and $\sum_{j=1}^3 \deg_{G_i}(u_j) \leq b+4$ for $i=2,3$. Therefore, $\sum_{i=1}^3\sum_{j=1}^3 \deg_{G_i}(u_j) \leq 3b+14 < 6b+12$ as $b \geq 1$.

\noindent{\bf Case 2: } $\ell_1 = 0$.

If $\ell_i=3$ for some $i \in \{2,3\}$, then $E(G_1) = \emptyset$. In this case $\sum_{i=1}^3\sum_{j=1}^3 \deg_{G_i}(u_j) \leq 2(3b+6) \leq 6b+12$.

Suppose $\ell_2, \ell_3\le 2$ and $\ell_2=2$ or $\ell_3=2$. Without loss of generality, assume $\ell_2=2$ and $\deg_{G_2}(u_j) \geq 3$ for $j=1,2$. Then $E(G_1) \subseteq \{ u_1u_2 \}$. In this case $\sum_{j=1}^3 \deg_{G_1}(u_j) \leq 2$ and $\sum_{j=1}^3 \deg_{G_i}(u_j) \leq 2b+4+2$ for $i=2,3$. Hence $\sum_{i=1}^3\sum_{j=1}^3 \deg_{G_i}(u_j) \leq 4b+14 \leq 6b+12$ as $b \geq1$.

Assume $\ell_2$, $\ell_3\le 1$ and $\ell_2=1$ or $\ell_3=1$. Without loss of generality, assume $\ell_2=1$ and $\deg_{G_2}(u_1) \geq3$. Then $G_1$ is a star centered at $u_1$. We have $\sum_{j=1}^3 \deg_{G_1}(u_j) \leq 4$ and $\sum_{j=1}^3 \deg_{G_i}(u_j) \leq b+2+4$ for $i=2,3$. So $\sum_{i=1}^3\sum_{j=1}^3 \deg_{G_i}(u_j) \leq 2b+16 \leq 6b+12$ as $b \geq 1$.

Suppose $\ell_2$, $\ell_3 = 0$. In this case $\sum_{j=1}^3 \deg_{G_i}(u_j) \leq 6$ for $i=1,2,3$. Therefore,  $\sum_{i=1}^3\sum_{j=1}^3 \deg_{G_i}(u_j) \leq 18 \leq 6b+12$ as $b \geq 1$.
\end{proof}

The bound in the following lemma is tight because we may let $G_1=G_2=G_3$ be a star of order $n$ centered at a vertex of $A$.

\begin{lemma}\label{lemma3}
Given two sets $A\subset V$ such that $|A|=3$ and $|V|=n\ge 5$, let $G_1, G_2, G_3$ be three graphs on $V$ such that
no edge of $G_i$ is disjoint from an edge from $G_j$ for any $i\ne j$.
Then $\sum_{i=1}^3\sum_{v\in A} \deg_{G_i}(v) \leq 3(n+1)$.
\end{lemma}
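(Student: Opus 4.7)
The plan is a case analysis in the spirit of the proof of Lemma 2, using the symmetric cross-intersection hypothesis to pin down the structure of all three graphs simultaneously. Write $A = \{u_1, u_2, u_3\}$; the goal is to show $\sum_{i=1}^{3}\sum_{j=1}^{3} \deg_{G_i}(u_j) \le 3(n+1)$. For each $i$, I would define $\ell_i$ as the number of vertices of $A$ whose $G_i$-degree is at least $3$, and WLOG assume $\ell_1 \ge \ell_2, \ell_3$.

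The engine of the proof will be the following rigidity observation, which I would prove first: if $\deg_{G_i}(v) \ge 3$, then $G_i$ contains three distinct edges $\{v, x_k\}$ ($k=1,2,3$) through $v$, and for any edge $\{p, q\}$ of $G_j$ ($j \ne i$), cross-intersection forces $\{p,q\} \cap \{v, x_k\} \ne \emptyset$ for each $k$; since $|\{p,q\}| = 2$, the distinct vertices $x_1, x_2, x_3$ cannot all lie in $\{p, q\}$, so $v \in \{p, q\}$. Hence every edge of $G_j$ must contain $v$, i.e., \emph{$G_j$ is a star centered at $v$}.

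In Case $\ell_1 \ge 2$, after relabelling so that $\deg_{G_1}(u_1), \deg_{G_1}(u_2) \ge 3$, applying the observation twice gives $E(G_2), E(G_3) \subseteq \{u_1 u_2\}$; the cross-intersection constraint on $G_1$ then places it inside the double-star at $\{u_1, u_2\}$, yielding $\sum_{v \in A}\deg_{G_1}(v) \le 2(n-1) + 2$ and at most $4$ more from $G_2, G_3$ combined, totalling $\le 2n + 4$. In Case $\ell_1 = 1$, with $\deg_{G_1}(u_1) \ge 3$, both $G_2, G_3$ are stars at $u_1$ by the observation; if one of them has $\ge 3$ edges, a symmetric application of the observation forces $G_1$ to also be a star at $u_1$, giving the extremal sum $3(n-1) + 6 = 3(n+1)$ exactly, and otherwise a short case check (on whether $G_2, G_3$ have $0, 1$, or $2$ edges) shows that $G_1$ is a star at $u_1$ plus at most one additional edge, bounding the total by $3(n+1)$ comfortably. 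Case $\ell_1 = 0$ is immediate: $\deg_{G_i}(u_j) \le 2$ for all $i, j$, so the total is at most $18 \le 3(n+1)$ whenever $n \ge 5$.

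The main obstacle I anticipate is the bookkeeping in the subcase of Case $\ell_1 = 1$ where $G_2, G_3$ are small: here $G_1$ can pick up an extra edge outside the star at $u_1$ only if the constraints from the two small stars happen to be aligned, and tracking where this extra edge sits relative to $A$ (and the possibility that $G_1$ becomes a double-star at two vertices of $A$ when some $G_j$ has a single edge $u_1 u_2$) requires a small but careful argument. The elegance of the plan is that the extremal configuration --- three stars aligned at a common vertex of $A$ --- emerges naturally from the rigidity observation, making the tight bound $3(n+1)$ visible rather than the result of clever counting.
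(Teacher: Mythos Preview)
Your plan is essentially the paper's own proof: the same rigidity observation (a vertex of $G_i$-degree $\ge 3$ forces every $G_j$, $j\ne i$, to be a star at that vertex), the same $\ell_i$ parameters, and the same case split on $\max_i \ell_i$.

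There is one recurring slip to watch. In Case $\ell_1\ge 2$ you assert that cross-intersection places $G_1$ inside the double-star at $\{u_1,u_2\}$, but this uses an edge of $G_2$ or $G_3$; if $E(G_2)=E(G_3)=\emptyset$ there is no constraint on $G_1$ at all. The same issue reappears in the $\ell_1=1$ subcase: your claim that $G_1$ is ``a star at $u_1$ plus at most one additional edge'' fails when, say, $G_2=\{u_1a\}$ and $G_3=\emptyset$, since then $G_1$ may be any subgraph of the double-star at $\{u_1,a\}$. None of this is fatal --- the missing cases are trivially bounded by $3(n-1)$ --- but your argument as written does not cover them. The paper avoids the problem by never invoking cross-intersection back onto $G_1$: it just uses the crude bounds $\sum_j\deg_{G_1}(u_j)\le 3(n-1)$ in the $\ell_1\ge 2$ case and $\sum_j\deg_{G_1}(u_j)\le (n-1)+2+2$ (directly from $\ell_1=1$) in the small-$G_2,G_3$ subcase, both of which already suffice.
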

\begin{proof} Assume $A=\{u_1,u_2,u_3\}$ and let $b= n-3\ge 2$. We need to show that $\sum_{i=1}^3\sum_{j=1}^3 \deg_{G_i}(u_j) \le 3b+12$.

Let $\ell_i$ denote the number of the vertices in $A$ of degree at least 3 in $G_i$. We distinguish the following two cases:

\noindent{\bf Case 1:} $\ell_i \geq 1$ for some $i \in [3]$.

Without loss of generality, $\ell_1\geq1$ and $\deg_{G_1}(u_1) \geq 3$. If $\deg_{G_1}(u_2) \geq 3$ or $\deg_{G_1}(u_3) \geq 3$, say, $\deg_{G_1}(u_2) \geq 3$, then $E(G_i) \subseteq \{ u_1u_2 \}$ for $i=2,3$ -- otherwise we can find two disjoint edges $e_1$ and $e_2$ from two distinct graphs of $G_1,G_2,G_3$. In this case $\sum_{j=1}^3 \deg_{G_1}(u_j) \leq 3b+6$ and $\sum_{j=1}^3 \deg_{G_i}(u_j) \leq 2$ for $i=2,3$, which implies that $\sum_{i=1}^3\sum_{j=1}^3 \deg_{G_i}(u_j) \leq 3b+10$.

Assume $\deg_{G_1}(u_j) \leq 2$ for $j=2,3$. We know that $G_i$, $i=2,3$ is a star centered at $u_1$ -- otherwise one edge of $G_1$ must be disjoint from one edge of $G_i$, $i \in \{2,3\}$. If $\deg_{G_2}(u_1) \geq 3$ or $\deg_{G_3}(u_1) \geq 3$, then $G_1$ is also a star centered at $u_1$. In this case $\sum_{j=1}^3 \deg_{G_i}(u_j) \leq b+4$ for $i \in [3]$, so $\sum_{i=1}^3\sum_{j=1}^3 \deg_{G_i}(u_j) \leq 3b+12$. Otherwise $\deg_{G_i}(u_1) \leq 2$ for $i=2,3$, hence $\sum_{j=1}^3 \deg_{G_i}(u_j) \leq 4$ for $i=2,3$. Since $\sum_{j=1}^3 \deg_{G_1}(u_j) \leq b+6$, we have $\sum_{i=1}^3\sum_{j=1}^3 \deg_{G_i}(u_j) \leq b+14 \leq 3b+12$.

\noindent{\bf Case 2:} $\ell_i=0$ for $i\in[3]$.

In this case $\sum_{j=1}^3 \deg_{G_i}(u_j) \leq 6$ for $i=1,2,3$.  Hence $\sum_{i=1}^3\sum_{j=1}^3 \deg_{G_i}(u_j) \leq 18 \leq 3b+12$ as $b \geq 2$.
\end{proof}


\section{Proof of Lemma \ref{lemma7}}

Choose a largest matching of $H$, denoted by $M$, such that every edge of $M$ is of type $UUW$. To the contrary, assume that $|M| \leq |W|-1$.
Let $U_1= V(M) \cap U$, $U_2 = U\setminus U_1$, $W_1 = V(M) \cap W$, and $W_2=W\setminus W_1$. Then $|U_1|=2|M|$, and $|U_2| = n-|W|-2|M|$. We distinguish the following two cases.

\noindent{\bf Case 1:} $0 < |W| \leq (\frac{1}{3}-\gamma)n$. 

We further distinguish the following two sub-cases:

\smallskip
\noindent{\bf Case 1.1:} A vertex $v_0\in W_2$ is adjacent to a vertex $u_0\in U_2$.

Let $M' = \{e \in M: \exists \, u'\in e, |N(v_0,u')\cap U_2| \geq 3\}$. Assume $\{u_1,u_2,v_1\} \in M'$ such that $u_1,u_2 \in U_1$, $v_1 \in W_1$, and $|N(v_0,u_1)\cap U_2|\geq 3$. We claim that
\begin{equation}
\label{eq:Nu0}
N(u_0,v_1)\cap (U_2 \cup \{u_2\}) = \emptyset.
\end{equation}
Indeed, if $\{u_0, v_1, u_3\} \in E(H)$ for some $u_3\in U_2$, then we can find $u_4\in U_2\setminus \{u_0, u_3\}$ such that $\{v_0, u_1, u_4\}\in E(H)$. Replacing $\{u_1,u_2,v_1\}$ by $\{u_0, v_1, u_3\}$ and $\{v_0, u_1, u_4\}$
gives a larger matching than $M$, a contradiction. The case when $\{u_0, v_1, u_2\}\in E(H)$ is similar.

By the definition of $M'$, there are at most $2(|U_1|-2|M'|)$ edges containing $v_0$ with one vertex in $U_1\setminus V(M')$ and one vertex in $U_2$.  This implies that
\[
\deg(v_0) \leq \binom{|U_1|}{2}+2|M'||U_2|+2(|U_1|-2|M'|) =  \binom{|U_1|}{2} + 2|U_1| + |M'| (2|U_2|-4).
\]
By \eqref{eq:Nu0}, there are at most $|U_1||W_1|-|M'|$ edges consisting of $u_0$, one vertex in $U_1$, and one vertex in $W_1$, and at most $(|U_2|-1)(|W_1|-|M'|)$ edges consisting of $u_0$, one vertex in $U_2$, and one vertex in $W_1$. Therefore,
\begin{align*}
\deg(u_0) &\leq \binom{|U|-1}{2}+|U_1||W_2|+|U_1||W_1|-|M'|+(|U_2|-1)(|W_1|-|M'|)\\
			&= \binom{|U|-1}{2}+|U_1||W| + (|U_2|-1) |W_1| - |U_2| |M'|,
\end{align*}
and consequently,
\[
\deg(v_0)+\deg(u_0) \le \binom{|U_1|}{2} + 2|U_1| + \binom{|U|-1}{2}+|U_1||W| + (|U_2|-1) |W_1| + |M'| (|U_2|-4).
\]
Since $|W|\le (\frac{1}{3}-\gamma)n$, we have $|U_2| > 3\gamma n>4$. As $|M'| \le |M| = |W_1|= \frac{|U_1|}{2}$, it follows that
\begin{align*}
\deg(v_0)+\deg(u_0)&\leq \binom{|U_1|}{2}+2|U_1|+ \binom{|U|-1}{2}+|U_1||W|+(|U_2|-1)\frac{|U_1|}{2}+\frac{|U_1|}{2}(|U_2|-4) \\
&= \left(\binom{|U|}{2}- \binom{|U_2|}{2}\right) +\binom{|U|-1}{2} +\left(|W|-\frac{1}{2}\right)|U_1|\\
&= \left( |U| - 1 \right)^2 -  \binom{|U_2|}{2} +  (2|W|-1)|M|.
\end{align*}
Since $|M| \leq |W|-1$ and $|U_2|\ge n - 3|W| + 2$, we derive that
\begin{align*}
\deg(v_0)+\deg(u_0) &\le (n- |W| -1)^2 - \binom{n- 3|W| + 2}2 + (2|W|-1)(|W|-1)\\
&= \frac{2}{3}n^2-\frac{7}{3}n+\frac{73}{24} - \frac{3}{2}\left( \frac{n}{3}+\frac{7}{6} - |W|\right)^2.
\end{align*}
Since $ |W| \leq (\frac{1}{3}-\gamma)n$, $0 < \varepsilon \ll \gamma$ and $n$ is sufficiently large, we have
\begin{align*}
\deg(v_0)+\deg(u_0) \leq   \frac{2}{3}n^2-\frac{7}{3}n+\frac{73}{24} - \frac{3}{2}\left( \gamma n+\frac{7}{6}\right)^2  < \frac{2}{3}n^2-\varepsilon n^2.
\end{align*}
This contradicts our assumption on $\sigma_2'(H)$ because $v_0$ and $u_0$ are adjacent.

\smallskip
\noindent{\bf Case 1.2:} No vertex in $W_2$ is adjacent to any vertex in $U_2$.

Fix $v_0\in W_2$. Since $v_0$ is not adjacent to any vertex in $U_2$, we have $\deg(v_0) \leq \binom{|U_1|}{2}=\binom{2|M|}{2}$.
Since $v_0$ is not an isolated vertex, there exists a vertex $u_1\in U_1$ that is adjacent to $v_0$. By the assumption, $H$ contains no edge containing $u_1$ with one vertex in $U_2$, one vertex in $W_2$. Thus
$\deg(u_1)\leq \binom{|U|-1}{2}+(|U|-1)|W|-|U_2||W_2|$.
Since $|M| \leq |W|-1$ and $|U|= n - |W|$, it follows that
\begin{align*}
\deg(v_0)+\deg(u_1) &\leq \binom{2(|W|-1)}{2}+\binom{|U|-1}{2}+(|U|-1)|W |-(n-3|W|+2) \\
&= \frac{3}{2}\left(|W|-\frac{1}{2}\right)^2+\frac{1}{2}n^2-\frac{5}{2}n+\frac{13}{8}.
\end{align*}
Furthermore,  since $|W| \leq  (\frac{1}{3}-\gamma)n$ and $0 < \varepsilon \ll \gamma$,  we derive that
\begin{align*}
\deg(v_0)+\deg(u_1)  & \leq \frac{3}{2}\left(\frac{n}{3}-\gamma n-\frac{1}{2}\right)^2+\frac{1}{2}n^2-\frac{5}{2}n+\frac{13}{8}  =\left(\frac{2}{3}-\gamma+\frac{3}{2}\gamma^2\right)n^2-\left(3-\frac{3}{2}\gamma\right)n+2 \\
& < \frac{2}{3}n^2-\varepsilon n^2,
\end{align*}
contradicting our assumption on $\sigma'_2(H)$.

\medskip

\noindent{\bf Case 2:} $|W|> (\frac{1}{3}-\gamma)n$. 

\begin{claim}\label{claim3}
$|M| \geq n/3 - \gamma' n$. 
\end{claim}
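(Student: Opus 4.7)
The plan is to argue by contradiction, mimicking the structure of Cases~1.1 and~1.2 already carried out, but with a different parameter substitution. Suppose $|M| < n/3 - \gamma' n$. Combined with the standing hypothesis $|M| \le |W| - 1$ and the case assumption $|W| > (1/3 - \gamma) n$, this gives
\[
|W_2| = |W| - |M| > (\gamma' - \gamma) n \ge 1 \quad\text{and}\quad |U_2| = n - |W| - 2|M| > 2\gamma' n,
\]
using $\gamma \ll \gamma'$. In particular both $W_2$ and $U_2$ are nonempty and $|U_2|$ exceeds any absolute constant, so the swapping arguments of Case~1 are available.

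I would then split on whether some $v_0 \in W_2$ is adjacent to some $u_0 \in U_2$. In this first subcase, the argument of Case~1.1 (the set $M'$, the key relation \eqref{eq:Nu0}, and the upper bound $|M'| \le |M|$) applies verbatim and yields
\[
\deg(v_0) + \deg(u_0) \le (|U| - 1)^2 - \binom{|U_2|}{2} + (2|W| - 1)|M|.
\]
Viewed as a function of $w = |W|$ and $m = |M|$, the right hand side is increasing in $m$ (for $m < n/2$) and decreasing in $w$ (for $w \le n/3$), so its maximum over the admissible region is attained at the corner $w = (1/3 - \gamma) n$, $m = n/3 - \gamma' n$. A routine expansion at that corner produces a value of the form $(2/3 - 2\gamma'/3 + O(\gamma + \gamma'^2)) n^2 + O(n)$, which is strictly less than $2n^2/3 - \varepsilon n^2$ since $\varepsilon \ll \gamma \ll \gamma'$. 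Because $v_0$ and $u_0$ are adjacent, this contradicts $\sigma_2'(H) > 2n^2/3 - \varepsilon n^2$.

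In the remaining subcase, no vertex of $W_2$ is adjacent to any vertex of $U_2$. I pick a nonisolated $v_0 \in W_2$ together with some $u_1$ lying in a common edge with $v_0$; the subcase assumption forces both non-$v_0$ vertices of that edge into $U_1$, so in particular $u_1 \in U_1$. The bounds of Case~1.2 then give $\deg(v_0) \le \binom{|U_1|}{2}$ and $\deg(u_1) \le \binom{|U| - 1}{2} + (|U| - 1)|W| - |U_2||W_2|$. A parallel monotonicity check (the sum is again increasing in $m$ and decreasing in $w$) shows that the worst case is $m = n/3 - \gamma' n$ at either endpoint of the admissible interval for $w$; the saving now comes from $\binom{2m}{2}$ being of order $2n^2/9 - (4\gamma'/3) n^2$ rather than $2n^2/9$, and this yields $\deg(v_0) + \deg(u_1) < 2n^2/3 - \varepsilon n^2$, again contradicting $\sigma_2'(H)$.

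The only substantive obstacle is the bookkeeping: one has to check that in each bound the $\gamma' n^2$ saving is not devoured by the other terms, in particular by the $(2|W| - 1)|M|$ contribution in the first subcase and by the $|U_2||W_2|$ correction in the second. Conceptually the proof just reruns Cases~1.1 and~1.2 with the substitution $|M| < n/3 - \gamma' n$ inserted in place of $|M| \le |W| - 1$ (the latter being the dominant constraint when $|W|$ was bounded away from $n/3$, but too weak here because $|W|$ can be as large as $n/3$).
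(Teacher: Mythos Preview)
Your argument is correct, but the paper proves Claim~\ref{claim3} by a considerably shorter route that avoids the case split altogether. The paper observes that, for any $v_0\in W_2$, the maximality of $M$ rules out every edge of type $U_2U_2\{v_0\}$, so
\[
\deg(v_0)\le \binom{|U|}{2}-\binom{|U_2|}{2},
\]
and for any $u\in U$ adjacent to $v_0$ one uses only the trivial bound $\deg(u)\le \binom{|U|-1}{2}+(|U|-1)|W|$. Adding these gives $\deg(v_0)+\deg(u)\le (n-1)(|U|-1)-\binom{|U_2|}{2}$, and plugging in $|U|\le (2/3+\gamma)n$ and $|U_2|\ge 2\gamma' n$ already forces the sum below $2n^2/3-\varepsilon n^2$. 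Your approach instead reruns the finer machinery of Cases~1.1 and~1.2 (the set $M'$, the swap~\eqref{eq:Nu0}, the nonadjacency subcase), which is sound and yields slightly sharper upper bounds but at the cost of more bookkeeping; the paper's point is that once $|U_2|$ is of order $\gamma' n$, the crude loss of $\binom{|U_2|}{2}\sim 2\gamma'^2 n^2$ in $\deg(v_0)$ alone is enough, with no need to track edges of $u$ carefully or to distinguish whether the neighbour lies in $U_1$ or $U_2$.
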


\begin{proof}
To the contrary, assume that $|M| < n/3 - \gamma' n$. Fix $v_0\in W_2$. Then $\deg(v_0) \leq \binom{|U|}{2} - \binom{|U_2|}2$ because there is no edge of type $U_2 U_2 W_2$. Suppose $u\in U$ is adjacent to $v_0$. Trivially $\deg(u) \leq \binom{|U|-1}{2} + (|U| - 1)|W|$. Thus
\begin{align*}
\deg(v_0) + \deg(u) &\le \binom{|U|-1}{2} + (|U| - 1)|W| + \binom{|U|}{2} - \binom{|U_2|}2 = (n-1)(|U| - 1) -  \binom{|U_2|}2.
\end{align*}
Our assumptions imply that $|U|\leq 2n/3 + \gamma n$ and $|U_2|\geq2 \gamma'n$. As a result,
\[
\deg(v_0) + \deg(u) \leq \left(n-1\right)\left(\frac23 n + \gamma n -1 \right) - \binom{ 2\gamma'n}2< \frac{2}{3}n^2-\varepsilon n^2,
\]
because $\varepsilon \ll \gamma \ll \gamma'$ and $n$ is sufficiently large. This contradicts our assumption on $\sigma'_2(H)$.
\end{proof}

Fix $u_1 \ne u_2 \in U_2$ and $v_0 \in W_2$.
Trivially $\deg(w) \leq \binom{|U|}{2}$ for any vertex $w \in W$ and $\deg(u) \leq \binom{|U|-1}{2}+|W|(|U|-1)$ for any vertex $u \in U$. Furthermore, for any two distinct edges $e_1, e_2\in M$, we observe that at least one triple of type $UUW$ with one vertex from each of $e_1$ and $e_2$ and one vertex from $\{u_1,u_2,v_0\}$ is \emph{not} an edge -- otherwise there is a matching $M_3$ of size three on $e_1\cup e_2\cup \{u_1,u_2,v_0\}$ and $M_3\cup M\setminus \{e_1, e_2\}$ is thus a matching larger than $M$.
By Claim \ref{claim3}, $|M| \geq n/3 - \gamma'n$. Thus,

\begin{align*}
\deg(u_1)+ \deg(u_2)+ \deg(v_0)\leq 2\left( \binom{|U|-1}{2}+|W|(U|-1)\right) +\binom{|U|}{2}-\binom{n/3 - \gamma'n}{2}.
\end{align*}

On the other hand, since $|W| > (\frac{1}{3}-\gamma)n \ge n/4$, Claim  \ref{claim2} implies that $u_i$ is adjacent to some vertex in $W$ for $i=1,2$. We know that $v_0$ is adjacent to some vertex in $U$. Therefore, $\deg(u_i) > \left(2n^2/3-\varepsilon n^2\right)-\binom{|U|}{2}$ for $i=1,2$, and $\deg(v_0) > \left(2n^2/3-\varepsilon n^2\right)-\left(\binom{|U|-1}{2}+|W|(|U|-1)\right)$. It follows that
\begin{align*}
\deg(u_1)+ \deg(u_2)+\deg(v_0)> 3\left(\frac{2n^2}{3}-\varepsilon n^2\right) - 2\binom{|U|}{2}- \binom{|U|-1}{2}-|W|(|U|-1).
\end{align*}
The upper and lower bounds for $\deg(u_1)+ \deg(u_2)+\deg(v_0)$ together imply that
\begin{align*}
3\left( \binom{|U|-1}{2}+|W|(|U|-1)+\binom{|U|}{2}\right)-\binom{n/3 - \gamma'n}{2} &> 3\left(\frac{2n^2}{3}-\varepsilon n^2\right), \\
\text{or} \quad (|U| - 1)(n-1) - \frac13 \binom{n/3 - \gamma'n}{2} &> \frac{2n^2}{3}-\varepsilon n^2,
\end{align*}
which is impossible because $|U|\le 2n/3 + \gamma n$, $0 < \varepsilon \ll \gamma \ll \gamma' \ll 1$ and $n$ is sufficiently large.
This completes the proof of Lemma~\ref{lemma7}.

\section{Proof of Theorem \ref{theorem6}}

Choose a matching $M$ such that (i) $M$ covers all the vertices of $W$; (ii) subject to (i), $|M|$ is the largest. Lemma \ref{lemma7} implies that such a matching exists. Let $M_1=\{e\in M: e\cap W\neq \emptyset\}$, $M_2=M\setminus M_1$, and $U_3=V(H)\setminus V(M)$. We have $|M_1|=|W|$, $|M_2|=|M|-|W|$, $|U_3|=n-3|M|$.

Suppose to the contrary, that $|M| \leq n/3-1$.  Fix three vertices $u_1,u_2,u_3$ of $U_3$. We distinguish the following two cases.

\noindent{\bf Case 1:} $|M| \leq n/3-\eta n$.

Trivially there are at most $3|M|$ edges in $H$ containing $u_i$ and two vertices from the same edge of $M$ for $i = 1,2,3$. For any distinct $e_1$, $e_2$ from $M$, we claim that
\[\sum_{i=1}^3|L_{u_i}(e_1,e_2)| \leq 18.\]
Indeed, let $H_1$ be the 3-partite subgraph of $H$ induced on three parts $e_1$, $e_2$, and $\{u_1,u_2,u_3\}$.
We observe that $H_1$ does not contain a perfect matching -- otherwise, letting $M_1$ be a perfect matching of $H_1$, $\left(M\setminus \{e_1,e_2\}\right) \cup M_1$ is a  larger matching than $M$, a contradiction. Apply Lemma \ref{lemma1} with $n=k=s=3$, we obtain that $|E(H_1)| \leq 18$. Therefore $\sum_{i=1}^3|L_{u_i}(e_1,e_2)| \leq 18 $.

For any $e \in M_1$, we claim that
\[\sum_{i=1}^3|L_{u_i}(e,U_3)| \leq 6(|U_3|-1).\]
Indeed, assume $e =\{v_1,v_2,v_3\} \in M_1$ with $v_1 \in W$. Apply Lemma \ref{lemma2} with $A=\{u_1,u_2,u_3\}$, $V=U_3$, and $G_i=\left(U_3, L_{v_i}(U_3)\right)$ for $i=1,2,3$. Since $|M| \leq n/3-4$, we have $|B|=|U_3|-3 \geq 2$. By the maximality of $M$, no edge of $G_1$ is disjoint from an edge of $G_2$ or $G_3$. By Lemma \ref{lemma2}, $\sum_{i=1}^3\sum_{j=1}^3 \deg_{G_i}(u_j) \leq 6(|U_3|-1)$. Hence $\sum_{i=1}^3|L_{u_i}(e,U_3)| = \sum_{i=1}^3\sum_{j=1}^3 \deg_{G_i}(u_j)\le 6(|U_3|-1)$.

Similarly, for any $e \in M_2$,  we can apply Lemma \ref{lemma3} to obtain that \[\sum_{i=1}^3|L_{u_i}(e,U_3)| \leq 3(|U_3|+1).\]

Putting these bounds together gives
\begin{align*}
\sum_{i=1}^{3}\deg(u_i) &\leq 18\binom{|M|}{2}+9|M|+\sum_{i=1}^{3}|L_{u_i}(V(M_1),U_3)|+\sum_{i=1}^{3}|L_{u_i}(V(M_2),U_3)|\\
&\leq 18\binom{|M|}{2}+9|M|+6|M_1|(|U_3|-1)+3|M_2|(|U_3|+1).
\end{align*}
Since $|M_1|=|W|$, $|M_2|=|M|-|W|$, $|U_3|=n-3|M|$, we derive that
\begin{align}
\sum_{i=1}^{3}\deg(u_i) & \leq 18\binom{|M|}{2}+9|M|+6|W|(n-3|M|-1)+3(|M|-|W|)(n-3|M|+1)\nonumber\\
& =  (3n-9|W|+3)|M|+3|W|n-9|W|. \nonumber
\end{align}

Furthermore, $3n-9|W|+3 >0$ and $|M| \leq n/3-\eta n$ implies that

\begin{align}
\sum_{i=1}^{3}\deg(u_i) & \leq  (3n-9|W|+3)\left(\frac{n}{3}-\eta n\right)+3|W|n-9|W| \nonumber\\
& =   \left(9 \eta n-9\right)|W|+\left(1-3\eta\right)n^2+\left(1-3\eta\right)n. \label{1}
\end{align}

If $|W| \leq n/4$, from \eqref{1}, we have
\begin{align*}
\sum_{i=1}^{3}\deg(u_i) & \leq  \left(9 \eta n-9\right)\frac{n}{4}+\left(1-3\eta\right)n^2+\left(1-3\eta\right)n=\left(1-\frac{3}{4}\eta\right)n^2-\left(3\eta+\frac{5}{4}\right)n,
\end{align*}
which contradicts the condition $\sum_{i=1}^{3}\deg(u_i) \geq 3\left(\frac{n^2}{3}-\frac{\varepsilon n^2}{2}\right)$ because $u_i \in U_3$ for $i \in [3]$ and $\varepsilon \ll \eta$.

If $|W| > n/4$, Claim \ref{claim2} implies that $u_i$ is adjacent to one vertex of $W$,  $i=1,2,3$. Furthermore, $\deg(w) \leq \binom{|U|}{2}$ for $w \in W$.  So
\begin{align}
\sum_{i=1}^3\deg(u_i) > 3\left(\frac{2n^2}{3}-\varepsilon n^2 - \binom{|U|}{2} \right) = 3\left(\frac{2n^2}{3}-\varepsilon n^2 - \binom{n-|W|}{2} \right). \nonumber
\end{align}
The upper and lower bounds for $\sum_{i=1}^3\deg(u_i)$ together imply that
\begin{align*}
 \left(9 \eta n-9\right)|W|+\left(1-3\eta\right)n^2+\left(1-3\eta\right)n+3\binom{n-|W|}{2} & > 3\left(\frac{2n^2}{3}-\varepsilon n^2\right),
\end{align*}
which is a contradiction because $|W| > n/4$, $0< \varepsilon \ll \eta \ll 1$ and $n$ is sufficiently large.

\medskip
\noindent{\bf Case 2:} $|M| > n/3-\eta n$.

If $|M| = n/3-1$, then $|U_3| = 3$ and we can not apply Lemmas \ref{lemma2} and \ref{lemma3}. In fact, whenever $|M| > n/3-\eta n $, Lemma \ref{lemma1} suffices for our proof. 

Let $W'=\{v \in W: \deg(v) \leq (5/18+\tau)n^2\}$. Let $M'$ be the sub-matching of $M$ covering every vertex of $W'$. If $|W'| \leq \rho n$, we claim that $\deg_{H'}(u) \geq \left(\frac{5}{9}+\gamma\right)\binom{n}{2}$ for every vertex $u \in V(H')$, where $H': = H[V\setminus V(M')]$. Indeed, from the definition of $W'$, $\deg_{H}(u) > (5/18+\tau)n^2$ for every vertex $u \in V(H')$. Hence, $$\deg_{H'}(u) \geq \deg_H(u)-3n|W'| > \left(\frac{5}{18}+\tau \right)n^2-3n|W'|.$$ Since $|W'| \leq \rho n$, $0 < \gamma \ll \rho \ll \tau \ll 1$ and $n$ is sufficiently large, we have
\begin{align*}
\deg_{H'}(u)> \left(\frac{5}{18}+\tau\right)n^2-3\rho n^2> \left(\frac{5}{9}+\gamma\right)\binom{n}{2}.\\
\end{align*}
In addition, $n$ is divisible by $3$, so $|V(H')|$ is divisible by $3$.  \eqref{eq:Nu4} implies that $H'$ contains a perfect matching $M''$. Now $M' \cup M''$ is a perfect matching of $H$.

Therefore, we assume that $|W'| \geq \rho n$ in the rest of the proof. If one vertex of $u_1,u_2,u_3$, say, $u_1$, is adjacent to one vertex in $W'$, the definition of $W'$ implies that $\deg(u_1) > 2n^2/3- \varepsilon n^2-\left(\frac{5}{18}+\tau\right)n^2$. Recall that $\deg(u_i) > n^2/3-\varepsilon n^2/2$ for $i=2,3$. Thus 

\begin{align}\label{eq4}
\sum_{i=1}^3\deg(u_i) > \left(\frac{4}{3}n^2-2\varepsilon n^2\right)-\left(\frac{5}{18}+\tau\right)n^2= \left(\frac{19}{18}-2\varepsilon-\tau \right)n^2.
\end{align}

By Lemma \ref{lemma1}, we have

\begin{align*}
\sum_{i=1}^{3}\deg(u_i) &\leq 18\binom{|M|}{2}+9|M|+9|M|(n-3|M|-1)\\
& = -18\left(|M|-\frac{1}{4}n+\frac{1}{4}\right)^2+\frac{9}{8}n^2-\frac{9}{4}n+\frac{9}{8},
\end{align*}
where $18\binom{|M|}{2}$ accounts for edges between pairs of matching $M$, $9|M|$ for edges with two vertices in the same matching edge from $M$, and $9|M|(n-3|M|-1)$ for edges with one vertex in $V(M)$, one vertex in $U_3$. Since $|M| > n/3-\eta n$, it follows that  $$\sum_{i=1}^{3}\deg(u_i) \leq -18\left(\frac{n}{3}-\eta n-\frac{1}{4}n+\frac{1}{4}\right)^2+\frac{9}{8}n^2-\frac{9}{4}n+\frac{9}{8} = (1+3\eta-18\eta^2) n^2+(9\eta-3)n.$$ However, $(1+3\eta-18\eta^2) n^2+(9\eta-3)n < \left(\frac{19}{18}-2\varepsilon-\tau \right)n^2$ because  $0 < \varepsilon \ll \eta \ll \tau \ll 1$ and $n$ is sufficiently large. It contradicts \eqref{eq4}.

If none of these three vertices $u_1,u_2,u_3$ are adjacent to the vertices in $W'$, we have
\begin{align*}
\sum_{i=1}^{3}\deg(u_i) &\leq 18\binom{|M|-|M'|}{2}+9(|M|-|M'|)+9(|M|-|M'|)(n-3|M|-1)\\
& +3\binom{2|M'|}{2}+3(2|M'|)(n-3|M'|-1)\\
&= -3\left(|M'|+\frac{1}{2}n-\frac{3}{2}|M|\right)^2-\frac{45}{4}|M|^2+\frac{9}{2}n|M|-9|M|+\frac{3}{4}n^2.
\end{align*}
Here $18\binom{|M|-|M'|}{2}$ accounts for edges between pairs of matching $M\setminus M'$, $9(|M|-|M'|)$ for edges with two vertices in the same matching edge from $M\setminus M'$, $9(|M|-|M'|)(n-3|M|-1)$ for edges with one vertex in $V(M\setminus M')$, one vertex in $U_3$, $3\binom{2|M'|}{2}$ for edges with two vertices in $V(M')\setminus W'$, and $3(2|M'|)(n-3|M'|-1)$ for edges with one vertex in $V(M')\setminus W'$, one vertex in $V(H)\setminus V(M')$. Since $-n/2+3|M|/2 < 0$ and $|M'|=|W'| \geq \rho n$, then
\begin{align*}
\sum_{i=1}^{3}\deg(u_i) & \leq -3\left(\rho n+\frac{1}{2}n-\frac{3}{2}|M|\right)^2-\frac{45}{4}|M|^2+\frac{9}{2}n|M|-9|M|+\frac{3}{4}n^2\\
&=-18\left(|M|-\frac{1}{4}n-\frac{1}{4}\rho n+\frac{1}{4}\right)^2+\left(\frac{9}{8}-\frac{15}{8}\rho^2-\frac{3}{4}\rho\right)n^2-\frac{9}{4}\rho n-\frac{9}{4}n+\frac{9}{8}.\\
\end{align*}
Recall that $0 < \rho \ll 1$, so $\frac{1}{4}n+\frac{1}{4}\rho n-\frac{1}{4} <  \frac{n}{3}-\eta n$. Furthermore, $|M| > \frac{n}{3}-\eta n$, hence we have
\begin{align*}
\sum_{i=1}^{3}\deg(u_i) & \leq -18\left(\frac{n}{3}-\eta n-\frac{1}{4}n-\frac{1}{4}\rho n+\frac{1}{4}\right)^2+\left(\frac{9}{8}-\frac{15}{8}\rho^2-\frac{3}{4}\rho\right)n^2-\frac{9}{4}\rho n-\frac{9}{4}n+\frac{9}{8} \\ & = \left(1-3\rho^2-9\eta \rho+3\eta-18\eta^2\right)n^2+(9\eta-3)n,
\end{align*}
which contradicts the condition $\sum_{i=1}^3\deg(u_i) \geq 3 \left(n^2/3-\varepsilon n^2/2\right)$ because $0 < \varepsilon \ll \eta \ll \rho \ll 1$ and $n$ is sufficiently large. This completes the proof of Theorem~\ref{theorem6}.

\section{Concluding remarks}

In this paper we consider the minimum degree sum of two adjacent vertices that guarantees a perfect matching in $3$-graphs. Given $3\le k< n$ and $2\le s\le n/k$, can we generalize this problem to $k$-graphs not containing a matching of size $s$?
For $1\leq \ell \leq k$, let $H_{n,k,s}^\ell$ denote the $k$-graph whose vertex set is partitioned into two sets $S$ and $T$ of size $n-s\ell+1$ and $s\ell-1$, respectively, and whose edge set consists of all the $k$-sets with at least $\ell$ vertices in $T$. Apparently $H_{n,k,s}^\ell$ contains no matching of size $s$. A well-known conjecture of Erd\"{o}s ~\cite{Erd65} says that $H_{n,k,s}^1$  or $H_{n,k,s}^k$  is the densest $k$-graph on $n$ vertices not containing a matching of size $s$. It is reasonable to speculate that the largest $\sigma'_2(H)$ among all $k$-graphs $H$ on $n$ vertices not containing a matching of size $s$ is also attained by $H_{n,k,s}^\ell$.
Note that $H_{n,k,s}^k$ is a complete $k$-graph of order $sk-1$ together with $n-sk+1$ isolated vertices and thus $\sigma'_2(H_{n,k,s}^k)= 2\binom{sk-2}{k-1}$.  When $1 \leq \ell \leq k-2$, any two vertices of $H_{n,k,s}^\ell$ are adjacent and thus $\sigma'_2(H_{n,k,s}^\ell)=2\delta_1(H_{n,k,s}^\ell)$. When $\ell =k-1$, it is easy to see that $\sigma'_2(H_{n,k,s}^{k-1})=2\binom{s(k-1)-2}{k-1}+(n-s(k-1)+2)\binom{s(k-1)-2}{k-2}$.


Assume $s=n/k$. Since $H_{n,k,n/k}^k$ contains isolated vertices and $\delta_1(H_{n,k,n/k}^\ell)\le \delta_1(H_{n,k,n/k}^1)$ for $1\le \ell \le k-2$, we only need to compare $\sigma'_2(H_{n,k,n/k}^{1})$ and $\sigma'_2(H_{n,k,n/k}^{k-1})$. For sufficiently large $n$, it is easy to see that $\sigma'_2(H_{n,k,n/k}^{1}) < \sigma'_2(H_{n,k,n/k}^{k-1})$ when $k \leq 6$ and $\sigma'_2(H_{n,k,n/k}^{1}) > \sigma'_2(H_{n,k,n/k}^{k-1})$ when $k \geq 7$.

\begin{problem} Does the following hold for any sufficiently large $n$ that is divisible by $k$? Let $H$ be a $k$-graph of order $n$ without isolated vertex. If $k \leq 6$ and $\sigma_2'(H) > \sigma'_2(H_{n,k,n/k}^{k-1})$ or $k \geq 7$ and $\sigma_2'(H) > \sigma'_2(H_{n,k,n/k}^1)$, then $H$ contains a perfect matching.
\end{problem}

Now assume $k=3$ and $2\le s\le n/3$. Note that
\begin{align*}
\sigma'_2(H_{n,3,s}^3) &= 2 \binom{3s-2}{2}, \quad
\sigma'_2(H_{n,3,s}^1) =2\left(\binom{n-1}{2}-\binom{n-s}{2} \right), \ \text{and} \\
\sigma'_2(H_{n,3,s}^2) & =\binom{2s-2}{2}+\left(n-2s+1\right)\binom{2s-2}{1}+\binom{2s-1}{2} = (2s-2)(n-1).
\end{align*}
It is easy to see that $\sigma'_2(H_{n,3,s}^2) > \sigma'_2(H_{n,3,s}^1)$. Zhang and Lu \cite{Yi2} made the following conjecture.

\begin{conjecture}\cite{Yi2}\label{con2}
There exists $n_0 \in \mathbb{N}$ such that the following holds. Suppose that $H$ is a $3$-graph of order $n \geq n_0$ without isolated vertex. If $\sigma_2'(H) > 2\left( \binom{n-1}{2}-\binom{n-s}{2}\right)$ and $n \geq 3s$, then $H$ contains no matching of size $s$ if and only if $H$ is a subgraph of $H_{n,3,s}^2$.
\end{conjecture}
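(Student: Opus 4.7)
The plan is to adapt the strategy from Theorems~\ref{the1} and \ref{theorem6} to arbitrary $s\le n/3$. Define the low-degree set
\[ W := \left\{v\in V(H) : \deg(v) \le \binom{n-1}{2}-\binom{n-s}{2}\right\} \]
and set $U := V(H)\setminus W$. The hypothesis $\sigma_2'(H) > 2\bigl(\binom{n-1}{2}-\binom{n-s}{2}\bigr)$ immediately forces that no edge of $H$ contains two vertices of $W$, so $W$ is \emph{independent} in this hypergraph sense. The easy direction of the conjecture is clear: every edge of $H_{n,3,s}^2$ uses at least two of the $2s-1$ vertices of $T$, so the matching number of any subgraph of $H_{n,3,s}^2$ is at most $\lfloor (2s-1)/2\rfloor=s-1$.

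For the nontrivial direction, I would split by $|W|$. If $|W|\ge n-2s+1$, pick any $T\supseteq V\setminus W$ of size $2s-1$; the complement $S=V\setminus T\subseteq W$ is still independent, so every edge has at most one vertex in $S$ and therefore $H\subseteq H_{n,3,s}^2$. Conversely, when $H\subseteq H_{n,3,s}^2$ each vertex of $S$ has degree at most $\binom{2s-1}{2}=(2s-1)(s-1)$, and one checks that $(2s-1)(s-1)\le\binom{n-1}{2}-\binom{n-s}{2}$ for $n\ge 3s$, so $S\subseteq W$ and $|W|\ge n-2s+1$. Thus the task reduces to showing that $|W|\le n-2s$ forces a matching of size $s$ in $H$.

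Assume $|W|\le n-2s$. I would follow the two-stage template of Theorem~\ref{theorem6}. First, in the subregime $|W|\le n/3$, prove an analog of Lemma~\ref{lemma7}: a matching $M_W$ of size $|W|$ whose edges are all of type $UUW$ exists, by essentially the same exchange argument (there is enough slack since $|U|\ge 2s$). Then take a maximum matching $M\supseteq M_W$, assume $|M|\le s-1$ for contradiction, and pick three vertices $u_1,u_2,u_3$ from $V\setminus V(M)$, which has size at least $n-3(s-1)\ge 3$. Bound $\sum_i\deg(u_i)$ from above using Lemmas~\ref{lemma1}--\ref{lemma3} applied to $M$ split into its $W$-meeting and $W$-avoiding edges, and from below using the $\sigma_2'$-hypothesis combined with a Claim~\ref{claim2}-type adjacency guarantee; comparing the bounds should contradict $|M|\le s-1$.

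The hard part is the subregime $n/3<|W|\le n-2s$, which can occur only when $s<n/3$. Here no matching can cover $W$, so Lemma~\ref{lemma7} has no direct analog, and the lower bound on $\sum_i\deg(u_i)$ weakens because an unmatched vertex of $W$ need not be adjacent to any other unmatched vertex. I would proceed by first establishing a stability version in the spirit of Theorem~\ref{theorem6}: if $H$ has no matching of size $s$ and $\sigma_2'(H) > 2\binom{n-1}{2}-2\binom{n-s}{2}-\varepsilon n^2$, then $H$ is structurally close to $H_{n,3,s}^2$, say $V(H)$ admits a partition $S\sqcup T$ with $|T|\approx 2s-1$ and only $o(n^3)$ edges violating the type constraint. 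A finer case analysis at the exact threshold $2(\binom{n-1}{2}-\binom{n-s}{2})$ would then pin down $H_{n,3,s}^2$ as the unique extremal example. By analogy with the $s=n/3$ case treated in the paper, this near-extremal step is where the argument becomes most delicate and will likely require the absorbing method or a quasi-random variant of it, since relying solely on degree-sum exchange arguments does not seem sufficient once $|W|$ exceeds $n/3$.
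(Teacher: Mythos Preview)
The statement you are attempting to prove is labeled as a \emph{conjecture} in the paper (Conjecture~\ref{con2}), not a theorem. The paper gives no proof of it: it is cited from~\cite{Yi2} as an open problem, with the remark that it has been verified only in the regimes $n\ge 9s^2$~\cite{Yi2} and $n\ge 13s$~\cite{Yi3}. There is therefore no ``paper's own proof'' to compare your proposal against.

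As for the proposal itself, it is a sketch rather than a proof, and you candidly identify the gap yourself: the subregime $n/3<|W|\le n-2s$ is not handled. Your observation that $|W|\ge n-2s+1$ already gives $H\subseteq H_{n,3,s}^2$ is correct and elementary, and the plan to imitate Lemma~\ref{lemma7} and the counting of Section~4 when $|W|\le n/3$ is plausible (and indeed this is essentially the range in which the known partial results $n\ge 13s$ operate, since then $n-2s>n/3$ never occurs). But the intermediate range is precisely the obstacle: there a maximum $UUW$-matching cannot cover $W$, the three-vertex degree-sum lower bound degenerates, and invoking ``the absorbing method or a quasi-random variant'' is a statement of hope, not an argument. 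Until that range is handled, the conjecture remains open, which is consistent with the paper's treatment of it.
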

Zhang and Lu~\cite{Yi2} showed that the conjecture holds when $n \geq 9s^2$. Later the same authors \cite{Yi3} proved the conjecture for $n \geq 13s$. If Conjecture \ref{con2} is true, then it implies the following theorem of K\"{u}hn, Osthus and Treglown \cite{Kuhn2}.

\begin{theorem}\cite{Kuhn2}
There exists $n_0 \in \mathbb{N}$ such that if $H$ is a $3$-graph of order $n \geq n_0$ with $\delta_1(H) \geq \binom{n-1}{2}-\binom{n-s}{2}+1$ and $n \geq 3s$, then $H$ contains a matching of size $s$.
\end{theorem}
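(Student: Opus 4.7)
The plan is to derive this theorem directly from Conjecture~\ref{con2} by a short contradiction argument. First, observe that the hypothesis $\delta_1(H) \geq \binom{n-1}{2} - \binom{n-s}{2} + 1 \geq 1$ immediately implies that $H$ has no isolated vertex, and that
\[
\sigma_2'(H) \geq 2\delta_1(H) \geq 2\!\left(\binom{n-1}{2} - \binom{n-s}{2}\right) + 2 > 2\!\left(\binom{n-1}{2} - \binom{n-s}{2}\right),
\]
so the degree-sum hypothesis of Conjecture~\ref{con2} is satisfied.

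Next, suppose for contradiction that $H$ contains no matching of size $s$. Since $n \geq 3s$, Conjecture~\ref{con2} forces $H \subseteq H_{n,3,s}^2$. Recall that in $H_{n,3,s}^2$ the vertex set is split into $S$ of size $n-2s+1$ and $T$ of size $2s-1$, with edges precisely the triples meeting $T$ in at least two vertices. In particular, any vertex $v \in S$ has degree at most $\binom{2s-1}{2}$ in $H_{n,3,s}^2$, and hence in $H$. Since $n \geq 3s$, the set $S$ is nonempty, so there is such a vertex $v$.

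The main (and essentially the only) computation is to verify that
\[
\binom{2s-1}{2} < \binom{n-1}{2} - \binom{n-s}{2} + 1
\]
whenever $n \geq 3s$. Using $\binom{n-1}{2} - \binom{n-s}{2} = (s-1)n - \tfrac{(s-1)(s+2)}{2}$, substituting $n \geq 3s$ gives $(s-1)n - \tfrac{(s-1)(s+2)}{2} \geq \tfrac{(s-1)(5s-2)}{2}$, which strictly exceeds $\binom{2s-1}{2} = (s-1)(2s-1)$ since $5s-2 > 2(2s-1)$. Thus $\deg_H(v) \leq \binom{2s-1}{2} < \binom{n-1}{2} - \binom{n-s}{2} + 1 \leq \delta_1(H)$, a contradiction. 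Hence $H$ must contain a matching of size $s$.

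I do not foresee a real obstacle here: the entire argument is a reduction to Conjecture~\ref{con2} plus a one-line inequality between the extremal degree in $H_{n,3,s}^2$ and the Dirac-type threshold in the theorem. The only mild care is to check that $S$ is nonempty (which holds since $n - 2s + 1 \geq s + 1 \geq 1$ from $n \geq 3s$) and that the elementary inequality comparing $\binom{2s-1}{2}$ and $\binom{n-1}{2} - \binom{n-s}{2}$ is strict for all $s \geq 1$ when $n \geq 3s$.
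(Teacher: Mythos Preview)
Your derivation is correct and is exactly the implication the paper has in mind: in the paragraph immediately preceding the theorem, the authors assert that ``If Conjecture~\ref{con2} is true, then it implies the following theorem of K\"uhn, Osthus and Treglown,'' and then simply \emph{cite} the theorem from~\cite{Kuhn2} without supplying any argument. You have filled in those details cleanly: the minimum-degree hypothesis forces no isolated vertices and yields $\sigma_2'(H)>2\bigl(\binom{n-1}{2}-\binom{n-s}{2}\bigr)$; Conjecture~\ref{con2} then forces $H\subseteq H_{n,3,s}^2$; and the inequality $\binom{2s-1}{2}<\binom{n-1}{2}-\binom{n-s}{2}+1$ for $n\ge 3s$ contradicts the degree hypothesis at any vertex of~$S$.

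One caveat worth making explicit: the paper does \emph{not} prove this theorem. It is quoted as an established result of~\cite{Kuhn2}, proved there by entirely different (absorbing/extremal) methods. What the paper claims, and what you have verified, is only the \emph{conditional} implication Conjecture~\ref{con2} $\Rightarrow$ Theorem. Since Conjecture~\ref{con2} is open (only the ranges $n\ge 9s^2$ and $n\ge 13s$ are known from~\cite{Yi2,Yi3}), your argument is not a standalone proof of the theorem, and you should present it as such.
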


Our Theorem~\ref{the1} suggests a weaker conjecture than Conjecture \ref{con2}.

\begin{conjecture}\label{con3}
There exists $n_1 \in \mathbb{N}$ such that the following holds. Suppose that $H$ is a $3$-graph of order $n \geq n_1$ without isolated vertex. If $\sigma_2'(H) > \sigma'_2(H_{n,3,s}^2)$ and $n \geq 3s$, then $H$ contains a matching of size $s$.
\end{conjecture}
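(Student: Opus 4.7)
The plan is to split the range $3s \leq n$ into two regimes. When $n \geq 13s$, Conjecture~\ref{con2} is already established by Zhang and Lu~\cite{Yi3}, and since $\sigma'_2(H_{n,3,s}^2) - 2\bigl(\binom{n-1}{2}-\binom{n-s}{2}\bigr) = s(s-1) > 0$, the hypothesis of Conjecture~\ref{con3} is strictly stronger than that of Conjecture~\ref{con2} in this regime, so the conclusion follows. The remaining work is in the range $3s \leq n < 13s$, where $s = \Theta(n)$; here I would mimic Sections~3 and~4 of this paper with $s$ playing the role that $n/3$ played for Theorem~\ref{the1}, and with the reference graph $H^*$ replaced by $H_{n,3,s}^2$.

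For Step~1, the analogue of Lemma~\ref{lemma7}, introduce $W = \{v \in V(H) : \deg(v) \leq (s-1)(n-1) - \tfrac{\varepsilon}{2} n^2\}$ and $U = V(H) \setminus W$. The hypothesis $\sigma'_2(H) > (2s-2)(n-1)$ forces any two vertices of $W$ to be non-adjacent. If $|W| \geq n-2s+1$, take $T$ to be any $(2s-1)$-superset of $V \setminus W$ and set $S = V \setminus T \subseteq W$; since $S$ is pairwise non-adjacent, every edge of $H$ meets $T$ in at least two vertices, so $H$ embeds into $H_{n,3,s}^2$, which together with the extremality of $H_{n,3,s}^2$ contradicts the strict inequality $\sigma'_2(H) > \sigma'_2(H_{n,3,s}^2)$. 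Hence $|W| \leq n-2s$, and I would take a largest matching $M$ of type $UUW$ with $|M| \leq |W|-1$ and run the same case analysis as Section~3: depending on whether an unmatched $v_0 \in W$ is adjacent to some unmatched $u_0 \in U$, upper-bound $\deg(v_0)+\deg(u_0)$ and derive a contradiction to $\sigma'_2(H) > (2s-2)(n-1)$. This produces a matching of size at most $|W|$ covering $W$.

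For Step~2, the analogue of Theorem~\ref{theorem6}, take a maximum matching $M$ covering $W$ and suppose $|M| < s$. Fix three unmatched vertices $u_1,u_2,u_3 \in V \setminus V(M)$; Lemmas~\ref{lemma1}--\ref{lemma3} give upper bounds for $\sum_{i=1}^3 \deg(u_i)$ in terms of $|M|$, $|W|$, and $n$. When $|M| \leq s - \eta n$, direct comparison with the lower bound from $\sigma'_2$ (combined with the $|W| \geq n/4$ analogue of Claim~\ref{claim2}) yields a contradiction, exactly as in Case~1 of Section~4. When $|M| > s - \eta n$, pass to a sub-matching $M'$ covering the very-low-degree set $W' = \{v \in W : \deg(v) \leq (\tfrac{5}{18}+\tau)n^2\}$ and invoke K\"uhn, Osthus, and Treglown's theorem~\cite{Kuhn2} on the reduced hypergraph $H[V \setminus V(M')]$ to extend $M'$ to a matching of size $s$ in $H$. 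The sub-matching $M'$ needs to be selected so that the residual order is divisible by~$3$ and the $m_1$-hypothesis is preserved.

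The principal obstacle is the tight regime $s \in (n/3 - o(n),\, n/3]$ within $3s \leq n < 13s$. Here the slack in the Section~4 counting shrinks, and the reduction in the case $|M| > s - \eta n$ becomes delicate: the divisibility of $n - 3|M'|$ by~$3$, the preservation of the $m_1$-hypothesis, and the bound $|W'| \leq \rho n$ must be coordinated uniformly in the parameter $s/n$. Locating the correct degree threshold defining $W'$ as a function of $s/n$, and confirming that such a choice survives the narrow window near $s = n/3$, appears to be the main technical hurdle; an absorbing-type refinement rather than a literal translation of Section~4 may be needed.
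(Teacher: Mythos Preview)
The statement you are addressing is presented in the paper as an \emph{open conjecture}, not a theorem; the paper offers no proof of it.  Theorem~\ref{the1} is exactly the special case $s=n/3$, and the authors explicitly propose Conjecture~\ref{con3} as a weakening of Conjecture~\ref{con2} suggested by that result.  So there is no ``paper's own proof'' to benchmark your proposal against, and what you have written should be read as a research outline rather than a reconstruction.

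That said, the outline is sensible and your bookkeeping in the easy regime is correct: the identity $\sigma'_2(H_{n,3,s}^2)-2\bigl(\binom{n-1}{2}-\binom{n-s}{2}\bigr)=s(s-1)$ holds for all $n$, so for $n\ge 13s$ the Zhang--Lu theorem~\cite{Yi3} together with the observation that $H\subseteq H_{n,3,s}^2$ (with $H$ having no isolated vertex) forces $\sigma'_2(H)\le\sigma'_2(H_{n,3,s}^2)$ does dispose of that range.  Your embedding argument for $|W|\ge n-2s+1$ is also valid, since any $v\in S$ has a neighbour $u\in T$ and then $\deg_H(u)+\deg_H(v)\le\sigma'_2(H_{n,3,s}^2)$.

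Where you should be cautious is precisely where you already flag the obstacle.  The quantitative inequalities in Sections~3 and~4 are tuned to $s=n/3$: the thresholds $(1/3-\gamma)n$, $n/3-\gamma' n$, $n/3-\eta n$, the use of~\eqref{eq:Nu4}, and the numerics in Claim~\ref{claim3} all exploit that $|U|\approx 2n/3$ and that a perfect matching (not merely one of size $s$) is the target.  Replacing \eqref{eq:Nu4} by the K\"uhn--Osthus--Treglown size-$s$ theorem is the right idea, but the definition of $W'$ via the fixed cutoff $(5/18+\tau)n^2$ is specific to $s=n/3$; for general $s$ in the window $3s\le n<13s$ you would need a cutoff depending on $s/n$, and one must check that the two-case split (small $|W'|$ versus large $|W'|$) still closes.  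You identify this correctly as ``the main technical hurdle,'' and indeed it is the reason the statement remains a conjecture in the paper.
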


On the other hand, we may allow a $3$-graph to contain isolated vertices. Note that  $\sigma'_2(H_{n,3,s}^2) \geq \sigma'_2(H_{n,3,s}^3)$ if and only if $s \le (2n+4)/9$. 
We make the following conjecture.
\begin{conjecture}\label{con4}
There exists $n_2 \in \mathbb{N}$ such that the following holds. Suppose that $H$ is a $3$-graph of order $n \geq n_2$ and $2\le s\le n/3$. If $\sigma_2'(H) > \sigma'_2(H_{n,3,s}^2)$ and $s \leq (2n+4)/9$ or $\sigma_2'(H) > \sigma'_2(H_{n,3,s}^3)$ and $s > (2n+4)/9$, then $H$ contains a matching of size $s$.
\end{conjecture}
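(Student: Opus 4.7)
The plan is to reduce Conjecture~\ref{con4} to Conjecture~\ref{con3} when $s\le(2n+4)/9$, and to adapt the two-step argument that proves Theorem~\ref{theorem6} to the ``dense-clique plus isolated vertices'' extremal configuration when $s>(2n+4)/9$. In both regimes I would first extract a large non-isolated subgraph and then search for an $s$-matching inside it, using only vertices of manageable degree in the extension step.

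Concretely, let $I=\{v\in V(H):\deg(v)=0\}$, set $V'=V(H)\setminus I$ and $H'=H[V']$. For adjacent $u,v\in V'$ the trivial bound $\deg(u)+\deg(v)\le 2\binom{|V'|-1}{2}$ combined with the hypothesis forces a lower bound on $|V'|$. In the regime $s>(2n+4)/9$ the hypothesis $\sigma_2'(H)>2\binom{3s-2}{2}$ gives $|V'|\ge 3s$ at once, reducing the question to finding an $s$-matching in an isolated-vertex-free $3$-graph $H'$. On $H'$ I would then rerun the authors' two-step scheme: define $W$ to be the vertices of $H'$ of degree below a suitable $s$-dependent threshold; show that no two vertices of $W$ are adjacent; prove an analogue of Lemma~\ref{lemma7} yielding a matching covering $W$; and finally extend it to an $s$-matching using the degree-sum bounds of Lemmas~\ref{lemma1}, \ref{lemma2} and~\ref{lemma3} applied to three unmatched vertices, exactly as in the proof of Theorem~\ref{theorem6}. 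The computations there were stated with $|M|$ close to $n/3$, but the same estimates go through with $|M|$ close to $s$ and $|V'|$ in place of $n$.

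For $s\le(2n+4)/9$, Conjecture~\ref{con4} would follow from Conjecture~\ref{con3} once one ensures $|V'|\ge 3s$. When $s$ is comparable to $n/3$ this is forced by the $\sigma_2'$ condition via the same pair-degree computation as above. When $s$ is substantially smaller than $n/3$, the $\sigma_2'$ bound alone no longer forces $|V'|\ge 3s$; here I would instead argue that if $|V'|<3s$ then $H'$ must be so dense that it contains an $s$-matching greedily. The intended dichotomy is ``either $|V'|\ge 3s$ and Conjecture~\ref{con3} applies to $H'$, or $H'$ is near-complete on $V'$ and we win directly'', and the inputs $I\ne\emptyset$ and $\sigma_2'(H')=\sigma_2'(H)$ should feed directly into either branch.

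The main obstacle is the crossover $s\approx(2n+4)/9$, where $\sigma_2'(H_{n,3,s}^2)\approx\sigma_2'(H_{n,3,s}^3)$ and an extremal $H$ may resemble a hybrid of the two configurations, such as a dense clique on roughly $3s-1$ vertices together with some mildly lower-degree vertices outside. A robust stability theorem identifying which of the two configurations any near-extremal $H$ is close to would be the key new ingredient beyond the tools already in the paper. A secondary subtlety is that Lemmas~\ref{lemma2} and~\ref{lemma3} lose slack when $s\ll n/3$ and the unmatched set is very large, so their applications in the analogue of Case~1 of Theorem~\ref{theorem6} may need to be refined using a more delicate partition of the unmatched set by degree.
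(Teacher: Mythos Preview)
The paper does not attempt to prove Conjecture~\ref{con4} outright; it only shows that Conjecture~\ref{con3} implies Conjecture~\ref{con4}, and it does so \emph{uniformly for both regimes}. The whole argument is: delete the set $I$ of isolated vertices, set $H'=H[V\setminus I]$ and $n'=n-|I|$; then $\sigma_2'(H')=\sigma_2'(H)>\sigma_2'(H_{n,3,s}^2)\ge\sigma_2'(H_{n',3,s}^2)$ (in the regime $s>(2n+4)/9$ one first interposes $\sigma_2'(H_{n,3,s}^3)>\sigma_2'(H_{n,3,s}^2)$), checks $n'\ge n_1$, and applies Conjecture~\ref{con3} to $H'$. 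So your plan to rerun the Theorem~\ref{theorem6} machinery in the range $s>(2n+4)/9$ is unnecessary for the paper's purpose: the reduction to Conjecture~\ref{con3} already covers that range, and the ``main obstacle'' you identify (the crossover and hybrid extremal configurations) never arises in the derivation. If instead you intend to \emph{prove} Conjecture~\ref{con4} unconditionally, note that your large-$s$ adaptation of Theorem~\ref{theorem6} would need a substitute for the appeal to \eqref{eq:Nu4} in Case~2, and no such $s$-matching threshold is available.

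Your assertion that ``when $s$ is substantially smaller than $n/3$, the $\sigma_2'$ bound alone no longer forces $|V'|\ge 3s$'' is the one concretely wrong step. From $2\binom{n'-1}{2}\ge\sigma_2'(H')>(2s-2)(n-1)$ one gets $(n'-1)(n'-2)>2(s-1)(n-1)$. If $n'\le 3s-1$ this would yield
\[
3(3s-2)(s-1)\ge (n'-1)(n'-2)>2(s-1)(n-1),
\]
hence $9s-6>2n-2$, i.e.\ $s>(2n+4)/9$, contradicting the regime assumption. Thus $n'\ge 3s$ is forced automatically in the small-$s$ regime as well (you already observed it in the large-$s$ regime), so the ``near-complete on $V'$'' fallback and the dichotomy you propose are not needed. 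With this correction your small-$s$ reduction coincides with the paper's, and the large-$s$ reduction becomes equally short once you use $\sigma_2'(H_{n,3,s}^3)>\sigma_2'(H_{n,3,s}^2)$ there.
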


In fact, we can derive Conjecture \ref{con4} from Conjecture \ref{con3} as follows. Let $n_2 = \max \{ \binom{n_1}{2}, \frac{3}{2} n_1 \}$ and $H$ be a $3$-graph of order $n \geq n_2$ satisfying the assumption of Conjecture \ref{con4}. If $H$ contains no isolated vertex, then $H$ contains a matching of size $s$ by Conjecture \ref{con3}. Otherwise, let $W$ be the set of isolated vertices in $H$. Let $H' = H[V(H)\setminus W']$ and $n' = n - |W|$. Then $H'$ is a 3-graph without isolated vertex and $\sigma_2'(H') = \sigma_2'(H)$. When $2\le s\le (2n+4)/9$, we have $\sigma_2'(H') > \sigma'_2(H_{n,3,s}^2) > \sigma'_2(H_{n',3,s}^2)$. In addition, since $n\ge \binom{n_1}2$ and
\[
2\binom{n' - 1}2 \ge \sigma'_2(H')> (2s-2)(n-1)\ge 2(n-1),
\]
we have $n'\ge n_1$. When $s> (2n+4)/9$, we have $\sigma_2'(H') > \sigma'_2(H_{n,3,s}^3) > \sigma'_2(H_{n,3,s}^2) > \sigma'_2(H_{n',3,s}^2)$. In addition, since $n\ge 3n_1/2$ and
\[
2\binom{n' - 1}2 \ge \sigma'_2(H')> 2\binom{3s-2}2> 2\binom{2(n-1)/3}2,
\]
we have $n'\ge n_1$. In both cases, Conjecture \ref{con3} implies that $H'$ contains a matching of size $s$.

%



\begin{thebibliography}{99}

\bibitem{Aha} R. Aharoni and D. Howard,
\newblock A rainbow r-partite version of the Erd\H{o}s-Ko-Rado theorem,
\newblock {\em Combin. Probab. Comput.} 26 (2017), 321--337.

\bibitem{Alon} N. Alon, P. Frankl, H. Huang, V. R\"{o}dl, A. Ruci\'{n}ski, and B. Sudakov. Large matchings in uniform hypergraphs and the conjectures of Erd\H{o}s and Samuels, \newblock {\em J. Combin. Theory Ser. A} 119 (2012), 1200--1215.








\bibitem{Erd65}
P.~Erd{\H{o}}s,
\newblock A problem on independent {$r$}-tuples,
\newblock {\em Ann. Univ. Sci. Budapest. E{\H{o}}tv{\H{o}}s Sect. Math.} 8 (1965), 93--95.


\bibitem{Han} H. H\`{a}n, Y. Person, and M. Schacht, On perfect matchings in uniform hypergraphs with large minimum vertex degree, \newblock {\em SIAM J. Discrete Math.} 23 (2009), 732--748.


\bibitem{Han3} J. Han, Perfect matchings in hypergraphs and the Erd\H{o}s matching conjecture, \newblock {\em SIAM J. Discrete Math.} 30 (2016), 1351--1357.





\bibitem{Kha1} I. Khan, Perfect matching in 3-uniform hypergraphs with large vertex degree, \newblock {\em SIAM J. Discrete Math.} 27 (2013), 1021--1039.

\bibitem{Kha2} I. Khan, Perfect matchings in 4-uniform hypergraphs, \newblock {\em J. Combin. Theory Ser. B} 116 (2016), 333--366.

\bibitem{Kuhn1} D. K\"{u}hn and D. Osthus, Matchings in hypergraphs of large minimum degree, \newblock {\em J. Graph Theory} 51 (2006), 269--280.

\bibitem{Kuhn4} D. K\"{u}hn and D. Osthus, Embedding large subgraphs into dense graphs. In Surveys in combinatorics 2009, volume 365 of \newblock {\em  London Math. Soc. Lecture} Note Ser., pages 137-167. Cambridge Univ. Press, Cambridge, 2009.

\bibitem{Kuhn2} D. K\"{u}hn, D. Osthus and A. Treglown, Matchings in 3-uniform hypergaphs, \newblock {\em J. Combin. Theory Ser. B} 103 (2013), 291--305.

\bibitem{Kuhn3} D. K\"{u}hn, D. Osthus and T. Townsend, Fractional and integer matchings in uniform hypergraphs, \newblock {\em European J. Combin.} 38 (2014), 83--96.

\bibitem{Mar} K. Markstr\"{o}m and A. Ruci\'{n}ski, Perfect matchings (and Hamilton cycles) in hypergraphs with large degrees, \newblock {\em European J. Combin.} 32 (2011), 677--687.

\bibitem{Pik} O. Pikhurko, Perfect matchings and $K^3_4$-tilings in hypergraphs of large codegree, \newblock {\em Graphs Combin.} 24 (2008), 391--404.

\bibitem{RoRu-s} V. R\"odl and A. Ruci\'nski, Dirac-type questions for hypergraphs -- a survey (or more problems for Endre to solve),
\emph{An Irregular Mind}, Bolyai Soc. Math. Studies~21 (2010), 561--590.

\bibitem{Rod2} V. R\"{o}dl, A. Ruci\'{n}ski, and E. Szemer\'{e}di, Perfect matchings in uniform hypergraphs with large minimum degree, \newblock {\em European J. Combin.} 27 (2006), 1333--1349.

\bibitem{Rod1} V. R\"{o}dl, A. Ruci\'{n}ski, and E. Szemer\'{e}di, An approximate Dirac-type theorem for $k$-uniform hypergraphs, \newblock {\em  Combinatorica,} 28 (2008), 229--260.

\bibitem{Rod3} V. R\"{o}dl, A. Ruci\'{n}ski, and E. Szemer\'{e}di, Perfect matchings in large uniform hypergraphs with large minimum collective degree, \newblock {\em J. Combin. Theory Ser. A} 116 (2009), 613--636.

\bibitem{Tang} Y. Tang, and G. Yan, An approximate Ore-type result for tight hamilton cycles in uniform hypergraphs, \newblock {\em Discrete Math.} 340 (2017), 1528--1534.

\bibitem{Tre} A. Treglown and Y. Zhao, Exact minimum degree thresholds for perfect matchings in uniform hypergraphs,
\newblock {\em J. Combin. Theory Ser. A} 119 (2012), 1500--1522.

\bibitem{TrZh13}
A.~Treglown and Y.~Zhao,
\newblock Exact minimum degree thresholds for perfect matchings in uniform
  hypergraphs {II},
\newblock {\em J. Combin. Theory Ser. A} 120 (2013), 1463--1482.


\bibitem{Town2} A. Treglown and Y. Zhao, A note on perfect matchings in uniform hypergraphs,
\newblock {\em Electron. J. Combin.} 23 (2016), 1--16.



\bibitem{Yi1} Y. Zhang and M. Lu, Some Ore-type results for matching and perfect matching in $k$-uniform hypergraphs, submitted.

\bibitem{Yi2} Y. Zhang and M. Lu, $d$-matching in $3$-uniform hypergraphs, submitted.

\bibitem{Yi3} Y. Zhang and M. Lu, Matching in $3$-uniform hypergraphs, submitted.

\bibitem{Zhao} Y. Zhao, 
\newblock Recent advances on dirac-type problems for hypergraphs.
\newblock In {\em Recent Trends in Combinatorics}, volume 159 of {\em the IMA
  Volumes in Mathematics and its Applications}. Springer, New York, 2016.


\end{thebibliography}
\end{document}